\newtheorem{thm}{Theorem}[section]
\newtheorem{cor}[thm]{Corollary}
\newtheorem{lem}[thm]{Lemma}
\newtheorem{prop}[thm]{Proposition}
\theoremstyle{definition}
\newtheorem{defin}[thm]{Definition}
\newtheorem{rem}[thm]{Remark}
\numberwithin{equation}{section}
\begin{document}

\baselineskip=17pt


\title{On $p$-adic properties of Siegel modular forms}

\author[S. B\"{o}cherer]{Siegfried B\"{o}cherer}
\address{Kunzenhof 4B\\ Freiburg 79117\\
Germany}
\email{boech@rumms.uni-mannheim.de}

\author[S. Nagaoka]{Shoyu Nagaoka}
\address{Department of Mathematics\\Kinki University\\
577-8502 Osaka, Japan}
\email{nagaoka@math.kindai.ac.jp}

\date{}

\begin{abstract}
We show that Siegel modular forms of level $\Gamma_0(p^m)$ are 
$p$-adic modular forms. Moreover we show that derivatives of
such Siegel modular forms are $p$-adic. Parts of our results
are also valid for vector-valued modular forms. 
In our approach to $p$-adic Siegel modular forms we follow Serre \cite{Ser1}
closely; his proofs however do not generalize to the Siegel case
or need some modifications.
\end{abstract}

\subjclass[2010]{Primary 11F33; Secondary 11F55}

\keywords{$p$-adic modular forms, congruences for modular forms}

\maketitle

\section{Introduction}
\label{intro}
Starting with Swinnerton-Dyer \cite{SW} and Serre \cite{Ser1}, 
the mod $p$ properties
of elliptic modular forms and also their $p$-adic properties have 
been deeply studied.
Some aspects of this theory were later generalized to other
types of modular forms like Jacobi forms \cite{SO} and also Siegel modular forms
\cite{ICHI1}.
In our previous works we constructed Siegel modular
form congruent $1$ mod $p$; we did this for level one \cite{BN1} and 
also for 
level $p$ with additional good $p$-adic behavior in the other cusps
\cite{BN2}. \\
In the present paper we
are concerned with generalizing some of Serre's results to the case
of Siegel modular forms. 
In the first part we show that Siegel modular forms for congruence subgroups
$\Gamma^n_0(p^m)$ are always $p$-adic modular forms. For $m=1$ we cannot 
follow Serre directly \cite{Ser1} because certain modular forms of level $p$,
congruent 1 mod $p$ and with divisibility by $p$ in the other cusps 
are not available (there are $n+1$ cusps to be considered!).
The generalization to $m>1$ then works in the same way as in \cite{Ser2},
with some delicate new problem concerning the vector-valued case.\\  
The second part of this paper is concerned with derivatives of modular forms as
$p$-adic modular forms. 
In \cite{BN1} we generalized the $\varTheta$-operator, defined
on elliptic modular forms by $\sum a_nq^n\longmapsto \sum na_nq^n$
to Siegel modular forms. We showed that the algebra of Siegel modular forms
mod $p$ is stable under $\varTheta$. We generalize the $\varTheta$-operator
to a wide class of differential operators appearing in certain Rankin-Cohen
brackets  and show that they define $p$-adic modular forms; we also correct
a mistake in the proof presented in \cite{BN1}.\\ 
Most of our results are also valid for modular forms of real nebentypus.
Sometimes we just mention this
generalization without going into details. 
At some points our methods give results which are weaker for
vector-valued modular forms than for scalar-valued ones. The reason is
that we cannot use the $p$-th power of a modular form in the same way as
for the scalar-valued case; to take the $p$-th symmetric power is a 
good substitute, but it changes the representation space.
A more detailed treatment of the 
vector-valued case will be given in a subsequent work \cite{BN3}.\\ 
Finally we mention that our paper is 
not concerned with the intrinsic theory of Siegel modular 
forms over ${\mathbb F}_p$ (as created by Katz \cite{KA} in
degree $n=1$ and in general by Faltings and Chai \cite{FACH}).
We only deal with mod $p$ reductions of characteristic zero modular forms.
For an approach to $p$-adic Siegel modular forms based on the arithmetic theory
of Faltings-Chai we refer to \cite{ICHI2}.

\section{Preliminary}
\label{sec:2}
\subsection{Siegel modular forms}
\label{sec:2.1}
Let $\mathbb{H}_n$ denote the Siegel upper half space of degree
$n$. The real symplectic group $Sp_n(\mathbb{R})$ acts on
$\mathbb{H}_n$ in usual manner:
\[
Z\longmapsto M\langle Z\rangle :=(AZ+B)(CZ+D)^{-1}
\]
$(Z\in\mathbb{H}_n,M=\binom{A\,B}{C\,D}\in Sp_n(\mathbb{R}))$.\\
Let $(\rho,V_{\rho})$ be a finite dimensional polynomial representation
of $GL_n(\mathbb{C})$.
For any $V_{\rho}$-valued function $F(Z)$ on $\mathbb{H}_n$
and any element $M=\binom{A\,B}{C\,D}\in Sp_n(\mathbb{R})$,
we write
\[
(F\mid_{\rho,k}M)(Z)=\text{det}(CZ+D)^{-k}\rho (CZ+D)^{-1}F(M\langle Z\rangle ).
\]
Let $\Gamma\subset \Gamma^n:=Sp_n(\mathbb{Z})$ be a congruence subgroup and
$v$ a character of $\Gamma$.
\begin{defin}
\label{def:2.1}
\; A $V_\rho$-valued holomorphic function $F$
on $\mathbb{H}_n$ is called a ($V_\rho$-valued) Siegel modular
form of type $\rho\otimes\text{det}^k$ on $\Gamma$ with character $v$ if
\[
(F\mid_{\rho,k}M)(Z)=v(M)F(Z)\quad \text{for}\;\text{all}\;M\in\Gamma
\]
(and $F(Z)$ is bounded at the cusps for $n=1$). 
\end{defin}

We denote by $M_n^k(\Gamma,\rho,v)$
the space of such modular forms. In the case where $\rho$
is the one-dimensional trivial representation, 
then we are in the scalar-valued case and
we write $F\mid_k=F\mid_{\rho,k}$, and 
$M_n^k(\Gamma,v)=M_n^k(\Gamma,\rho,v)$ simply.\\
We will be mainly concerned with the Siegel modular group
$\Gamma^n$ and congruence subgroup
\[
\Gamma_0^n(N):=\left\{\binom{A\,B}{C\,D}\in Sp_n(\mathbb{Z})\mid 
C \equiv O \pmod{N}\right\}.
\]
Moreover, we assume that $v$ comes from a Dirichlet character $\chi$
mod $N$ which as usual acts on the determinant of the right lower
block. If $\chi$ is trivial, we write 
$M_n^k(\Gamma,\rho)$
for  simplicity.\\
If a Siegel modular form $F(Z)$ is periodic with respect to the
lattice $Sym_n(\mathbb{Z})$, then $F(Z)$ admits a Fourier
expansion of the form
\[
F(Z)=\sum_{0\leq T\in\Lambda_n}a_F(T)
\text{exp}(2\pi\sqrt{-1}\text{tr}(TZ)),\quad a_F(T)\in V_\rho,
\]
where
\[
\Lambda_n:=\{T=(t_{ij})\in Sym_n(\mathbb{Q})\mid t_{ii},2t_{ij}
\in\mathbb{Z}\}.
\]
\subsection{$p$-adic modular forms}
\label{sec:2.2}
By fixing a basis of $V_\rho$, we may view 
$\rho$ as a matrix-valued representation ( $V_\rho=\mathbb{C}^M$ for some $M$).
Taking $q_{ij}:=\text{exp}(2\pi\sqrt{-1}z_{ij})$ with $Z=(z_{ij})
\in\mathbb{H}_n$, we write
\[
\boldsymbol{q}^T:=\text{exp}(2\pi\sqrt{-1}\text{tr}(TZ))=
\prod_{i<j}q_{ij}^{2t_{ij}}\prod_{i=1}^nq_{ii}^{t_{ii}}.
\]
Using this notation, we have the generalized $q$-expansion:
\begin{align*}
F=\sum_{0\leq T\in\Lambda_n}a_F(T)\boldsymbol{q}^T
&=\sum\Big(a_F(T)\prod_{i<j}q_{ij}^{2t_{ij}}\Big)
\prod_{i=1}^nq_{ii}^{t_{ii}}\\
&\in\mathbb{C}^M[q_{ij}^{-1},q_{ij}][\![q_{11},\ldots,q_{nn}]\!],
\end{align*}
($a_F(T)=(a_F(T)^{(j)})\in \mathbb{C}^M$).
\\
For any subring $R$ of $\mathbb{C}$, we shall denote by
$M_n^k(\Gamma,\rho,\chi)(R)$ the $R$-module consisting of those
$F$ in $M_n^k(\Gamma,\rho,\chi)$ for which $a_F(T)$ is in $R^M$ for every
$T\in\Lambda_n$. From this, any element $F$ in $M_n^k(\Gamma,\rho,\chi)(R)$
may be regarded as an element of the space of formal power
series
$R^M[q_{ij}^{-1},q_{ij}]$ $[\![q_{11},\ldots,q_{nn}]\!]$.\\
For a prime number $p$, we denote by $\nu_p$ is the normalized
additive valuation on $\mathbb{Q}$ (i.e. $\nu_p(p)=1$). We tacitly
extend $\nu_p$ to appropriate field extensions $\boldsymbol{K}$ of
$\mathbb{Q}$ if necessary. \\
For a Siegel modular form 
$F=\sum a_F(T)\boldsymbol{q}^T\in M_n^k(\Gamma,\rho,\chi)(\boldsymbol{K})$,
we define $\nu_p(F)$ by
\[
\nu_p(F)=\mathop{\text{inf}}_{T\in\Lambda_n}\nu_p(a_F(T)),
\]
where $\nu_p(a_F(T))=\text{min}_{1\leq j\leq M}(\nu_p(a_F(T)^{(j)})$.
\begin{rem}
\label{rem:2.2} In the definition above we do not exclude the 
possibility that $\nu_p(F)$
becomes $-\infty$ in the case of arbitrary $\rho$. 
We do not know a (published) statement about
boundedness of denominators for general $\rho$ (see however Remark \ref{rem:3.7}
for a possible proof and also the preprint \cite{ICHI2} ). We say that
{\it $\nu_p$-boundedness} holds for $M^k_n(\Gamma,\rho,\chi)$,
if $\nu_p(f\mid_{k,\rho} \omega)$ is finite for all 
$f\in M^k_n(\Gamma,\rho,\chi)$ and all $\omega\in \Gamma^n$.  
\end{rem} 
For two Siegel modular forms
$F=\sum a_F(T)\boldsymbol{q}^T\in M_n^k(\Gamma,\rho,v)(\boldsymbol{K})$,
$G=\sum a_G(T)\boldsymbol{q}^T\in M_n^l(\Gamma,\rho,v)(\boldsymbol{K})$,
with $\nu_p(F)>-\infty$ and $\nu_p(G)>-\infty$ we write
\[
F \equiv G \pmod{p^m}
\]
if $\nu_p(a_F(T)-a_G(T))\geq m+\nu_p(F)$ for every $T\in\Lambda_n$.

\begin{defin}
\label{def:2.3}
\;A formal power series
\[
F=\sum a_F(T)\boldsymbol{q}^T\in \mathbb{Q}_p^M[q_{ij}^{-1},q_{ij}]
[\![q_{11},\ldots,q_{nn}]\!]
\]
is called a ($vector$-$valued$) $p$-$adic$ $Siegel$ 
$modular$ $form$ (in the sense
of Serre) if there exists a sequence of modular forms $\{F_m \}$
satisfying
\[
F_m=\sum a_{F_m}(T)\boldsymbol{q}^T\in M_n^{k_m}(\Gamma^n,\rho)(\mathbb{Q})
\quad \text{and}\quad \lim_{m\to\infty}F_m=F,
\]
where $\lim_{m\to\infty}F_m=F$ means that
\[
\mathop{\text{inf}}_{T\in\Lambda_n}(\nu_p(a_{F_m}(T)-a_F(T)))\;\longrightarrow+\infty
\quad (m\to \infty).
\]
\end{defin}
This definition also makes sense if we replace $\mathbb Q$ and $\mathbb{Q}_p$ by 
suitable extension fields.

\subsection{The Hecke operator $U(p)$}
\label{sec:2.3}
Let $F$ be a Siegel modular form in \\
$M_n^k(\Gamma_0^n(N),\rho,\chi)$ with
the Fourier expansion $F=\sum a_F(T)\boldsymbol{q}^T$. The action of
$U(p)$ on $F$ is defined by
\[
F\mid U(p)=\sum a_F(p\,T)\boldsymbol{q}^T.
\]
It is known that $U(p)$ maps the space $M_n^k(\Gamma_0^n(N),\rho,\chi)$
into itself (if $p\!\mid\!N$) and maps it into 
$M_n^k(\Gamma_0^n(\frac{N}{p}),\rho,\chi)$ if $p^2\!\mid\!N$ and $\chi$ is
defined modulo $\frac{N}{p}$. We recall the following result from \cite{BoU}:
\begin{thm}
\label{th:2.4}
The operator $U(p)$ is
bijective for $p\!\mid\!\mid\!N$.
\end{thm}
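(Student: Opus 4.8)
The plan is to use that $M_n^k(\Gamma_0^n(N),\rho,\chi)$ is finite dimensional and that, by the discussion preceding the theorem, $U(p)$ maps it into itself when $p\mid N$. Thus $U(p)$ is bijective if and only if it is injective, and it is injectivity that I would establish. The hypothesis $p\mid\mid N$ must enter in an essential way: if $p^2\mid N$, the same discussion shows that $U(p)$ carries the space into $M_n^k(\Gamma_0^n(N/p),\rho,\chi)$, a proper subspace, so $U(p)$ cannot be surjective. Hence the argument has to exploit that the $p$-part of the level is \emph{exactly} $\Gamma_0^n(p)$.

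The tool available precisely under $p\mid\mid N$ is the Atkin--Lehner involution $W_p$ at $p$. Writing $N=pM$ with $p\nmid M$, there is an integral symplectic similitude $\eta_p\in GSp_{2n}(\mathbb{Q})$ of similitude factor $p$ which normalizes $\Gamma_0^n(N)$ and satisfies $\eta_p^2\in\mathbb{Q}^\times\Gamma_0^n(N)$; via $F\mapsto F\mid_{\rho,k}\eta_p$ it induces an involution $W_p$ of the space (possibly replacing $\chi$ by its conjugate at $p$, a point to be monitored). Conjugation by $\eta_p$ carries the double coset of $\mathrm{diag}(I_n,pI_n)$ defining $U(p)$ to that of the opposite element $\mathrm{diag}(pI_n,I_n)$, and I write $U'(p):=W_pU(p)W_p^{-1}$ for the resulting operator, again an endomorphism of the same space.

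I would then prove invertibility of $U(p)$ by showing that the composite $U(p)\,U'(p)$ is invertible. The guiding observation is the central identity $\mathrm{diag}(I_n,pI_n)\,\mathrm{diag}(pI_n,I_n)=p\cdot I_{2n}$, whose double coset acts by a nonzero scalar (a power of $p$, with the factors $\det(pI_n)^k=p^{nk}$ and $\rho(pI_n)^{-1}$, up to a value of $\chi$). One does not get a clean scalar identity $U(p)U'(p)=\text{const}$ — this already fails for $n=1$ — but one does expect that $U(p)\,U'(p)$ respects a suitable finite filtration coming from the $p$-old/$p$-new structure and acts on each graded piece by a nonzero scalar of this shape. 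Triangularity with nonvanishing diagonal then makes $U(p)\,U'(p)$, and hence $U(p)$, invertible; it is exactly $p\mid\mid N$ that guarantees the filtration has finite length and that the coset bookkeeping closes up.

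The hard part is the Siegel-specific and vector-valued bookkeeping in this last step. One must check that $\eta_p$ genuinely normalizes $\Gamma_0^n(N)$ and compute its effect on the automorphy factor $\det(CZ+D)^k\rho(CZ+D)$; in the vector-valued case the scaling contributes factors $\rho(pI_n)$ that must be tracked through both $U(p)$ and $W_p$, and $W_p$ may twist the character at $p$, so one either restricts to a $\chi$ compatible with the involution or carries the twisted space along. Finally, because $Sp_n$ has $n+1$ cusps and a whole hierarchy of Klingen--Eisenstein strata, verifying the triangularity of $U(p)\,U'(p)$ with respect to such a filtration — rather than a naive scalar relation — is the delicate computational heart of the argument and the step I expect to be the main obstacle.
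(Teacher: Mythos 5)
First, a point of comparison: the paper itself contains no proof of Theorem \ref{th:2.4} at all --- it is quoted verbatim from \cite{BoU} (B\"ocherer, \emph{On the Hecke operator $U(p)$}). So there is no in-paper argument to match your proposal against; the relevant benchmark is the proof in that reference, which necessarily proceeds by other means than what you propose, since the structure theory you invoke does not exist for Siegel modular forms.

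Your preliminary reductions are fine: finite-dimensionality converts bijectivity into injectivity (or surjectivity), the discussion preceding the theorem gives stability of $M_n^k(\Gamma_0^n(N),\rho,\chi)$ under $U(p)$ for $p\mid N$, and your explanation of why $p\!\mid\!\mid\!N$ is essential is correct. The genuine gap is the core step. After conjugating by an Atkin--Lehner element to form $U'(p)=W_pU(p)W_p^{-1}$, you assert that $U(p)\,U'(p)$ ``respects a suitable finite filtration coming from the $p$-old/$p$-new structure and acts on each graded piece by a nonzero scalar.'' For elliptic modular forms this can be made precise because Atkin--Lehner newform theory exists: on the $p$-old space attached to an eigenform of level prime to $p$ one computes $U(p)$ explicitly (determinant $p^{k-1}\neq 0$), and on $p$-newforms $U(p)$ is a nonzero multiple of $W_p$. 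But for Siegel modular forms of degree $n>1$ --- the case at issue, and moreover vector-valued, with characters, $n+1$ cusps and Klingen--Eisenstein strata --- no $p$-old/$p$-new (Atkin--Lehner) theory is available in the literature, and nothing in your sketch constructs the filtration, proves that $U(p)\,U'(p)$ preserves it, or identifies the graded action. The heuristic you offer, $\mathrm{diag}(1_n,p1_n)\cdot\mathrm{diag}(p1_n,1_n)=p\cdot 1_{2n}$, does not control the product of the corresponding double cosets (as you yourself note, the scalar identity already fails for $n=1$), so it cannot substitute for that analysis. In effect you have reduced the theorem to a structure theory that is at least as deep as the theorem itself and currently nonexistent for $Sp_n$; since that is exactly where all the difficulty lies, the proposal as it stands is a plan rather than a proof, and its central step would fail to materialize precisely in the Siegel case the theorem is about.
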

\subsection{Coset representatives of $\Gamma_0^n(p)\backslash \Gamma^n$}
\label{sec:2.4}
In this subsection, we describe a system of representatives for 
$\Gamma_0^n(p)\backslash \Gamma^n$, to be used later on  to calculate 
a trace operator on modular forms.
For the finite field $\mathbb{F}_p$  let $P=\{\binom{A\,B}{0\;D}\}$
$\subset Sp_n(\mathbb{F}_p)$ be the Siegel parabolic subgroup.
For $0\leq j\leq n$ we define ``partial involutions''
\[
\omega_j=\omega_{j}(p)=
\left(\begin{matrix}
1_{n-j} & 0   & 0_{n-j} & 0\\
0       & 0_j & 0       & -1_j\\
0_{n-j} & 0   & 1_{n-j} & 0\\
0       & 1_j & 0       & 0_j
\end{matrix}\right).
\]
Then we have a Bruhat decomposition
\[
Sp_n(\mathbb{F}_p)=\mathop{\amalg}_{j=0}^n P\omega_jP,
\]
where the double coset $P\omega_jP$ consists of the set of
elements $\binom{A\,B}{C\,D}\in Sp_n(\mathbb{F}_p)$ with
$\text{rank}(C)=j$. Using the Levi decomposition $P=MN$
with Levi factor
\[
M=\left\{ m(A)=\binom{A\;\;\;\;\;\; 0\;\;}{\;\;0\;\;\;\;(A^{-1})^t}\mid
A\in GL_n(\mathbb{F}_p) \right\}
\]
and unipotent radical
\[
N=\left\{
n(B)=\binom{1\;\;B}{0\;\; 1}\mid B\in Sym_n(\mathbb{F}_p)
\right\},
\]
we easily see that
\[
(*)\quad \{\;
\omega_j\,n(B_j)\,m(A)\mid B_j\in Sym_j(\mathbb{F}_p),
A\in P_{n,j}(\mathbb{F}_p)\backslash GL_n(\mathbb{F}_p)
\}
\]
is a complete set of right coset representatives for
$P\backslash P\omega_jP$. Here $M_j(\mathbb{F}_p)$
is embedded into $M_n(\mathbb{F}_p)$ by
$B_j\longmapsto \binom{0\;\;\; 0}{\,\,0\,\;\, B_j}$ and
$P_{n,j}=\{ M\in GL_n\mid M=\binom{\quad *\;\;\;
\;\; *}{0_{j,n-j}\;\;*}\}$
is a standard maximal parabolic subgroup of $GL_n$.

Using strong approximation we obtain 
\begin{prop}
\label{prop:2.5} By lifting the coset
representatives $(*)$ for $0\leq j\leq n$ to elements of $\Gamma^n$, 
we get  a complete set of right coset representatives for
$\Gamma_0^n(p)\backslash \Gamma^n$. 
(We identify the lifts with their image modulo $p$.)
\end{prop}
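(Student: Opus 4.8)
The plan is to reduce the statement over $\mathbb{Z}$ to the corresponding statement over $\mathbb{F}_p$, where the Bruhat decomposition has already been established. The cosets $(*)$ give, by the preceding discussion, a complete set of right coset representatives for $P\backslash Sp_n(\mathbb{F}_p)$, since the $\omega_j n(B_j)m(A)$ range over representatives for $P\backslash P\omega_j P$ as $j$ runs from $0$ to $n$ and the double cosets $P\omega_j P$ partition $Sp_n(\mathbb{F}_p)$. So the real content is to transfer this to an assertion about $\Gamma_0^n(p)\backslash \Gamma^n$.

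First I would identify the two quotients that must be matched. Reduction modulo $p$ defines a group homomorphism $\pi\colon \Gamma^n = Sp_n(\mathbb{Z}) \to Sp_n(\mathbb{F}_p)$, and by definition $\Gamma_0^n(p) = \pi^{-1}(P)$, since the condition $C\equiv 0\pmod p$ is exactly the condition that the image lie in the Siegel parabolic $P$. Consequently $\pi$ induces a well-defined injection of coset spaces $\Gamma_0^n(p)\backslash \Gamma^n \hookrightarrow P\backslash Sp_n(\mathbb{F}_p)$, and lifting the representatives $(*)$ to elements of $\Gamma^n$ produces elements whose images distinguish distinct cosets; hence the lifts represent \emph{pairwise distinct} cosets of $\Gamma_0^n(p)\backslash \Gamma^n$. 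It remains to show this induced map is surjective, i.e. that $\pi\colon \Gamma^n \to Sp_n(\mathbb{F}_p)$ is itself surjective: given any $g\in Sp_n(\mathbb{F}_p)$, its $P$-coset is represented by some $\omega_j n(B_j)m(A)$, and a lift $\gamma\in\Gamma^n$ of that representative then satisfies $\pi(\gamma) \in Pg$, so $\gamma$ lies in the $\Gamma_0^n(p)$-coset mapping onto the given $P$-coset.

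The surjectivity of $\pi$ is precisely where strong approximation enters, and this is the one genuine input rather than a formality. The point is that $Sp_n$ is a simply connected semisimple group, so strong approximation guarantees that $Sp_n(\mathbb{Z})$ maps onto $Sp_n(\mathbb{Z}/p\mathbb{Z})=Sp_n(\mathbb{F}_p)$; equivalently, every symplectic matrix over $\mathbb{F}_p$ lifts to one over $\mathbb{Z}$. I expect this to be the main obstacle only in the sense that it is the sole non-combinatorial ingredient: once surjectivity of the reduction map is granted, the bijection $\Gamma_0^n(p)\backslash \Gamma^n \xrightarrow{\sim} P\backslash Sp_n(\mathbb{F}_p)$ is automatic, and the explicit representatives on the right transport to explicit representatives on the left.

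In summary, the proof assembles three pieces: the Bruhat decomposition already recorded shows $(*)$ represents $P\backslash Sp_n(\mathbb{F}_p)$; the identity $\Gamma_0^n(p)=\pi^{-1}(P)$ shows reduction induces an injection of coset spaces and preserves distinctness of the lifted representatives; and strong approximation supplies the surjectivity of $\pi$, closing the loop. The assertion that the lifts form a \emph{complete} set of representatives is exactly the statement that this induced map is a bijection, which the three pieces together establish.
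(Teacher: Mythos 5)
Your proposal is correct and follows exactly the route the paper intends: the paper's entire proof is the phrase ``Using strong approximation we obtain,'' i.e.\ surjectivity of the reduction map $Sp_n(\mathbb{Z})\to Sp_n(\mathbb{F}_p)$ combined with the identity $\Gamma_0^n(p)=\pi^{-1}(P)$ and the Bruhat decomposition already recorded. You have merely made explicit the (routine) verification that a surjective homomorphism with $\Gamma_0^n(p)=\pi^{-1}(P)$ induces a bijection $\Gamma_0^n(p)\backslash\Gamma^n \to P\backslash Sp_n(\mathbb{F}_p)$, which is exactly what the paper leaves to the reader.
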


For $F\in M_n^k(\Gamma_0^n(p),\rho)$, we define the trace
of $F$ as
\[
\text{tr}(F):=\sum_{M\in \Gamma_0^n(p)\backslash \Gamma^n}
F\mid_{\rho,k}M.
\] 
The trace clearly projects
$M_n^k(\Gamma_0^n(p),\rho)$ onto $M_n^k(\Gamma^n,\rho)$.
Using the coset representatives above, we give an explicit
description of the trace operator.\\
Noting that the action of the $n(B_j)$ comes down to an exponential
sum as a factor (equal either to zero or to $p^{\frac{j(j+1)}{2}}$), we obtain
\[
\text{tr}(F)=F+\sum_{j=1}^{n-1}p^{\frac{j(j+1)}{2}}F\mid_{\rho,k}
\omega_j\mid \widetilde{U}_j(p)+p^{\frac{n(n+1)}{2}}F\mid_{\rho,k}\omega_n\mid
\widetilde{U}_n(p).
\]
The action of $\widetilde{U}_{j}(p)$ is defined as follows.\\

For general $j$, if
\[
F\mid_{\rho,k}\omega_j
=\sum_{T\in\Lambda_n}b_j(T)\text{exp}(2\pi\sqrt{-1}
\text{tr}\big(\tfrac{1}{p}TZ\big)),
\]
then $F\mid_{\rho,k}\omega_j\mid\widetilde{U}_j(p)$ is defined as
\[
F\mid_{\rho,k}\omega_j\mid\widetilde{U}_j(p)=
\sum_{T\in\Lambda_n}\Big(\sum_{\widetilde{T}}b_j(\widetilde{T})\Big)
\text{exp}(2\pi\sqrt{-1}\text{tr}(TZ)),
\]
where $\widetilde{T}$ runs over the finite subset of $\Lambda_n$ determined
by $T$. We do not need the explicit shape of the $\widetilde{T}$ in the sequel,
except for the case $j=n$, where the description is much simpler:
\[
F\mid_{\rho,k}\omega_n\mid\widetilde{U}_n(p)=\sum_{T\in\Lambda_n}b_n(p\cdot T)
\text{exp}(2\pi\sqrt{-1}\text{tr}(TZ)).
\]

\section{Modular forms of level $p^m$}
\label{sec:3}

From now on $p$ will always be an odd prime.

\subsection{Modular forms of level $p$ are $p$-adic}
\label{sec:3.1}
To generalize Serre's result about modular forms for $\Gamma_0^n(p)$
being $p$-adic modular forms we cannot follow his strategy directly.
The problem is the (non-) 
existence of a modular form with the necessary properties 
$F\equiv 1 \pmod{p}$ and 
$F\mid\omega_j\equiv 0 \pmod{p} $ for all $j>0$ .
The best result towards the existence of such $F$ is (to the best of our 
knowledge) our work \cite{BN2} where we construct $F$ with
$F\equiv 1 \pmod p$ and $\nu_p(F\mid \omega_j)\geq -\frac{j(j-1)}{2}+1$. 
This is however not sufficient to apply 
Serre's method directly to Siegel modular forms,
because $\nu_p(F\mid_{\rho,k}\omega_j)$ is not necessarily positive for 
$j\geq 2$.
We need a variant of Serre's approach (interchanging the roles of the cusps):

We use a modular form ${\mathcal K}_{p-1}$ on $\Gamma_0^n(p)$
with Fourier coefficients in ${\mathbb Z}$ satisfying
\begin{eqnarray*}
{\mathcal K}_{p-1}\mid\omega_j &\equiv 0 & \pmod{p} \qquad (0\leq j\leq n-1),\\
{\mathcal K}_{p-1}\mid\omega_n &\equiv 1 & \pmod{p}.
\end{eqnarray*}
The existence of such a modular form is not a problem at all:
We may use 
$${\mathcal K}_{p-1}:= p^n \,\cdot\theta^n_L$$
where 
$\theta^n_L$ is the theta series associated with any $p$-special
lattice $L$ of rank $2p-2$ and determinant $p^2$. Here a $\mathbb{Z}$-lattice
$L$ is called $p$-special if there exists an automorphism $\sigma$  of $L$ 
such that $\sigma$ is of order $p$ and 
acts freely on $L\backslash\{\boldsymbol{0}\}$.
The existence of such lattices (for odd $p$ ) is discussed in \cite{BN1,BN2}.

\begin{prop}
\label{prop:3.1}
Let $p$ be an odd prime with $\nu_p$-boundedness for  
$M_n^k(\Gamma^n_0(p),\rho)$. Let $f$ be an element
of $M_n^k(\Gamma_0^n(p),\rho)$.
Then for any $\alpha\in {\mathbb N}$ there exists $\beta\in {\mathbb N}$
(depending on $\alpha, f $)
and $F\in M_n^{k+{\beta}\cdot (p-1)}(\Gamma^n,\rho)$
such that
$$\nu_p(f-F)\geq \nu_p(f)+\alpha.$$
The dependence of $\beta$ on $\alpha$ will be clarified below.
\end{prop}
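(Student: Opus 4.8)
The plan is to realise $f$, up to high $p$-adic precision, as the trace of a level-$p$ form that is \emph{concentrated at the cusp} $\omega_n$, where $\mathcal{K}_{p-1}$ behaves like the constant $1$. Since the assertion $\nu_p(f-F)\geq\nu_p(f)+\alpha$ is invariant under scaling $f$ by a power of $p$, I first normalise so that $\nu_p(f)=0$ and aim at $\nu_p(f-F)\geq\alpha$.

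The central device is the operator $V(g):=p^{n(n+1)/2}\,(g\mid_{\rho,k}\omega_n)\mid\widetilde{U}_n(p)$ on $M_n^k(\Gamma_0^n(p),\rho)$, which is precisely the top ($j=n$) term of the trace formula in Subsection~\ref{sec:2.4}. Because $\omega_n$ is, up to a scalar, an involution normalising $\Gamma_0^n(p)$, and $\widetilde{U}_n(p)$ becomes the operator $U(p)$ after conjugation by $\omega_n$, Theorem~\ref{th:2.4} shows that $V$ is a bijection of $M_n^k(\Gamma_0^n(p),\rho)$. I therefore set $g:=V^{-1}(f)$, again a level-$p$ form of weight $k$; the $\nu_p$-boundedness hypothesis (Remark~\ref{rem:2.2}) guarantees a finite constant $C=C(f)$ with $\nu_p(g\mid_{\rho,k}\omega_j)\geq -C$ for all $0\leq j\leq n$.

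Next I would fix a large integer $t$, put $\beta:=p^t$, and consider the level-one form $F:=\mathrm{tr}(\mathcal{K}_{p-1}^{\,p^t}\cdot g)\in M_n^{k+p^t(p-1)}(\Gamma^n,\rho)$, using that $\mathcal{K}_{p-1}$ has weight $p-1$. Expanding by the explicit trace formula together with the multiplicativity $(\mathcal{K}_{p-1}^{\,p^t}g)\mid\omega_j=(\mathcal{K}_{p-1}\mid\omega_j)^{p^t}(g\mid\omega_j)$, the terms split according to the behaviour of $\mathcal{K}_{p-1}$ at the cusps. The leading term is divisible by $p^{np^t}$ since $\mathcal{K}_{p-1}=p^n\theta^n_L$; for $1\leq j\leq n-1$ the relation $\mathcal{K}_{p-1}\mid\omega_j\equiv 0\pmod p$ forces valuation $\geq p^t-C$; and for $j=n$ the Frobenius congruence $(\mathcal{K}_{p-1}\mid\omega_n)^{p^t}\equiv 1\pmod{p^{t+1}}$, coming from $\mathcal{K}_{p-1}\mid\omega_n\equiv 1\pmod p$, produces $p^{n(n+1)/2}(g\mid\omega_n)\mid\widetilde{U}_n(p)=V(g)=f$ up to an error of valuation $\geq t+1-C$. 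Collecting these contributions gives $\nu_p(F-f)\geq\min(np^t,\,p^t-C,\,t+1-C)=t+1-C$ for $t$ large, so that choosing $t\geq\alpha+C-1$, i.e.\ $\beta=p^t$ of this size, yields $\nu_p(f-F)\geq\alpha=\nu_p(f)+\alpha$. This also exhibits the promised dependence of $\beta$ on $\alpha$ and on $f$ (through $C$).

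The main obstacle is the second step: one must justify that $V$ is genuinely bijective with a $p$-integrally controlled inverse, that is, identify $\widetilde{U}_n(p)$ with $U(p)$ via the Fricke involution $\omega_n$ so that Theorem~\ref{th:2.4} applies, and then invoke $\nu_p$-boundedness to bound the denominators of $g=V^{-1}(f)$ and of every $g\mid\omega_j$; without a finite $C$ the precision estimate collapses, which is exactly why the hypothesis appears. A secondary delicate point is the Frobenius boosting $(\mathcal{K}_{p-1}\mid\omega_n)^{p^t}\equiv 1\pmod{p^{t+1}}$, the mechanism that upgrades a single congruence mod $p$ at the cusp $\omega_n$ into arbitrarily high $p$-adic precision.
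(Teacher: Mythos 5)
Your proposal is correct and follows essentially the same route as the paper: both choose $g$ with $(g\mid_{\rho,k}\omega_n)\mid\widetilde{U}_n(p)$ proportional to $f$ via the invertibility of $U(p)$ (Theorem \ref{th:2.4}), form the trace of $g\cdot\mathcal{K}_{p-1}^{\beta}$, and use the cusp behaviour of $\mathcal{K}_{p-1}$ together with $\nu_p$-boundedness and the congruence $(\mathcal{K}_{p-1}\mid\omega_n)^{p^t}\equiv 1 \pmod{p^{t+1}}$ to estimate the $n+1$ pieces of the trace. The only cosmetic differences are that you absorb the scalar $p^{-n(n+1)/2}$ into $g$ rather than normalizing the trace, and you take $\beta=p^t$ where the paper allows $\beta=\kappa p^{\gamma}$.
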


\begin{proof}
 As usual, we assume $\nu_p(f)=0$. We use an extension of $\nu_p$
to the field generated by all Fourier coefficients of all the 
$f\mid_{k,\rho}\omega_j$.    
For the moment we consider (for an  arbitrary modular form 
$g\in M_n^k(\Gamma_0^n(p),\rho)$
and arbitrary $\beta=\kappa p^{\gamma}$)

$${\rm Tr}_{\beta}(g):=
p^{-\frac{n(n+1)}{2} }\cdot {\rm tr}(g\cdot {\mathcal K}_{p-1}^{\beta}).$$

The trace decomposes into $n+1$ pieces $Y_j$ which we consider separately:\\
For $0\leq j\leq n$ we have to look at

$$Y_j:=p^{\frac{j(j+1)}{2}-\frac{n(n+1)}{2}} \left(g\mid_{k,\rho }\omega_j\cdot 
({\mathcal K}_{p-1}\mid \omega_j)^{\beta}\right)\mid \widetilde{U}_j(p).$$ 
Then for $j<n$ we have
$$\nu_p(Y_j)\geq \frac{j(j+1)}{2}-\frac{n(n+1)}{2}+
 \nu_p(g\mid_{k,\rho}\omega_j)+\nu_p({\mathcal K}_{p-1}\mid\omega_j)\cdot 
\beta .$$
Clearly this becomes large if $\beta$ is large (note that
$\nu_p({\mathcal K}_{p-1}\mid\omega_j)>0$). 

The contribution for $j=n$ needs a more detailed study:
We write $\left({\mathcal K}_{p-1}\mid\omega_n\right)^{\beta}$ as
$1+p^{\gamma +1}X$ with a Fourier series $X$ with 
integral Fourier coefficients.   
Then
\begin{align*}
\left(g\mid_{k,\rho} \omega_n\cdot 
({\mathcal K}_{p-1}^{\beta}\mid\omega_n)\right)&\mid 
\widetilde{U}_n(p)\\
&=g\mid_{k,\rho}\omega_n\mid\widetilde{U}_n(p)+
p^{\gamma+1} \left(g\mid_{k,\rho}\omega_n\cdot X\right)\mid \widetilde{U}_n(p).
\end{align*}
Now we use that the $U(p)$ operator is invertible
as a Hecke operator for $\Gamma_0^n(p)$(cf. Theorem \ref{th:2.4}).
Therefore we may choose $g$ such that 
$$g\mid_{k,\rho}\omega_n\mid \widetilde{U}_n(p)=f.$$
With this choice of $g$ the 
contribution for $j=n$ to the trace of 
$g\cdot {\mathcal K}_{p-1}^{\beta}$ , which we call $Y_n$ satisfies
$$\nu_p(Y_n-f)\geq \gamma+1+ \nu_p(g\mid_{k,\rho} \omega_n).$$
Summarizing this, we see that
$F:={\rm Tr}_{\beta}(g)$ is congruent to $f$ modulo $p^{\alpha}$, 
if we choose $\gamma$ 
to be large enough.
\end{proof}
\begin{rem}
\label{rem:3.2}
We wrote $\beta=\kappa\cdot p^{\gamma}$ in the proof in order
to emphasize the different roles played by $\beta$ and $\gamma$.
We have to choose $\gamma$ large enough to assure the congruence
for $Y_n$, but to make the other $Y_j$ divisible by a high power of $p$
it is sufficient that $\beta$ becomes large.
\end{rem}
 
To get from the proposition above
a statement about $p$-adic modular forms, we need some rationality and integrality
properties:

\begin{prop}
\label{prop:3.3}
Assume that the polynomial representation $\rho:GL(n,{\mathbb C})\longrightarrow
GL(M,{\mathbb C}) $ is ${\mathbb Q}$-rational (i.e. the polynomials defining $\rho$
have rational coefficients). Then the following properties hold:\\
a) $M^k_n(\Gamma^n_0(p),\rho)= 
M^k_n(\Gamma^n_0(p),\rho)({\mathbb Q})\otimes {\mathbb C}$.\\
b) The $\nu_p$-boundedness holds for  $M^k_n(\Gamma^n_0(p),\rho)$\\
c) If $f\in  M^k_n(\Gamma^n_0(p),\rho)$ has rational Fourier coefficents, the same 
is true for $f\mid \omega_n$ and for $tr(f)$.
\end{prop}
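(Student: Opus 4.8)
The plan is to prove the three assertions (a), (b), (c) of Proposition~\ref{prop:3.3} in an order dictated by their logical dependence, since (b) will follow from a combination of (a) and a standard argument, while (c) is the technical engine that must be established first. The underlying principle throughout is that for a $\mathbb{Q}$-rational representation $\rho$, the Galois group $\mathrm{Gal}(\overline{\mathbb{Q}}/\mathbb{Q})$ acts on Fourier coefficients in a way compatible with the slash action, so that rationality is preserved under the operations appearing in the trace formula of Section~\ref{sec:2.4}.

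\textbf{Part (c).} First I would treat the rationality of $f\mid\omega_n$ and of $\mathrm{tr}(f)$. The key observation is that each $\omega_j$ is an explicit integral symplectic matrix, so the slash operator $f\mapsto f\mid_{\rho,k}\omega_j$ is defined over $\mathbb{Q}$ once $\rho$ is $\mathbb{Q}$-rational: the automorphy factor $\det(CZ+D)^{-k}\rho(CZ+D)^{-1}$ has entries that are Laurent polynomials with rational coefficients in the matrix entries, and conjugating the whole expression by $\sigma\in\mathrm{Gal}(\overline{\mathbb{Q}}/\mathbb{Q})$ permutes the Fourier coefficients of $f$ according to $\sigma$ while fixing the group element. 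Concretely, for the special case $j=n$ the action of $\widetilde U_n(p)$ is simply $b_n(T)\mapsto b_n(pT)$, so if the coefficients $b_n(T)$ of $f\mid_{\rho,k}\omega_n$ are rational, so are those of $f\mid\omega_n\mid\widetilde U_n(p)$. To see that the $b_n(T)$ themselves are rational when $f$ has rational Fourier coefficients, I would invoke the fact that $\omega_n$ has rational entries together with the $\mathbb{Q}$-rationality of $\rho$; then $\mathrm{tr}(f)$, being an explicit $\mathbb{Q}$-linear combination (the coefficients $p^{j(j+1)/2}$ are integers) of terms $f\mid_{\rho,k}\omega_j\mid\widetilde U_j(p)$, inherits rationality provided each summand does. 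The same Galois-equivariance argument handles the intermediate $\omega_j$.

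\textbf{Parts (a) and (b).} For (a), the inclusion $M^k_n(\Gamma^n_0(p),\rho)(\mathbb{Q})\otimes\mathbb{C}\subseteq M^k_n(\Gamma^n_0(p),\rho)$ is trivial. For the reverse inclusion I would run the standard descent argument: the space $M^k_n(\Gamma^n_0(p),\rho)$ is finite-dimensional and is defined by the $\mathbb{Q}$-rational condition $(f\mid_{\rho,k}M)=\chi(M)f$ for $M\in\Gamma^n_0(p)$; since these linear conditions have rational (indeed integral) coefficients once $\rho$ is $\mathbb{Q}$-rational, the solution space has a $\mathbb{Q}$-structure, and any $\mathbb{C}$-basis can be replaced by a $\mathbb{Q}$-rational one by solving the defining equations over $\mathbb{Q}$. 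This is where one must be slightly careful: the modular forms are functions, not just formal $q$-expansions, so the descent should be phrased in terms of the $q$-expansion map, using that a modular form is determined by its Fourier coefficients. Once (a) holds, (b) follows: given any $f\in M^k_n(\Gamma^n_0(p),\rho)$, write it over $\mathbb{C}$ in terms of a $\mathbb{Q}$-rational basis; each basis element has, by a finite-dimensionality argument, $q$-expansions with bounded denominators after applying any $\omega\in\Gamma^n$ (the coefficients lie in a fixed number field and the finitely many slashed forms $f\mid_{k,\rho}\omega$ span a finite-dimensional space), so $\nu_p(f\mid_{k,\rho}\omega)$ is finite.

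\textbf{Main obstacle.} The delicate point I expect is establishing $\nu_p$-boundedness in (b) in a way that genuinely uses only $\mathbb{Q}$-rationality of $\rho$, rather than smuggling in the scalar-valued case. Remark~\ref{rem:2.2} explicitly flags that $\nu_p$-boundedness for general $\rho$ is not known from the literature and points to Remark~\ref{rem:3.7} for a possible proof, which signals that the argument is nontrivial. The clean route is to reduce boundedness of denominators at $p$ to the statement that $M^k_n(\Gamma^n_0(p),\rho)$ has a basis with coefficients in a fixed order of a number field $\boldsymbol{K}$: since there are only finitely many cusps (equivalently, finitely many $\omega\in\Gamma^n$ modulo $\Gamma^n_0(p)$ up to the relevant action), and each $f\mid_{k,\rho}\omega$ again lies in a finite-dimensional space of forms with coefficients in a finite extension, the valuations $\nu_p(a(T))$ are bounded below. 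The work is in verifying that the $\mathbb{Q}$-structure of part (a) is compatible with an integral (or at least $p$-integral after clearing a fixed denominator) structure on the coefficient module, which is exactly the content that fails to be automatic for vector-valued forms and is the reason the authors isolate the $\mathbb{Q}$-rationality hypothesis on $\rho$.
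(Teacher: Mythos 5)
Your proposal has a genuine gap, and it sits exactly at the point the proposition is designed to address. In parts (a) and (c) you treat the compatibility of the Galois action on Fourier coefficients with the slash action (``conjugating the whole expression by $\sigma$ permutes the Fourier coefficients of $f$ according to $\sigma$ while fixing the group element'') as a formal consequence of $\omega_n$ having integral entries and $\rho$ being $\mathbb{Q}$-rational. It is not. The Fourier coefficients of $f\mid_{\rho,k}\omega_n$ constitute the expansion of $f$ at a different cusp; they are not obtained from the coefficients $a_f(T)$ by any algebraic operation with rational coefficients, and an element $\sigma\in\mathrm{Aut}(\mathbb{C})$ acts on formal power series, not on holomorphic functions on $\mathbb{H}_n$, so one cannot ``apply $\sigma$'' to the transformation identity. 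The assertion that $f\mapsto f^{\sigma}$ preserves modularity and satisfies $(f\mid\gamma)^{\sigma}=f^{\sigma}\mid\gamma$ (up to cyclotomic twists) is a deep theorem of Shimura \cite{SH0} --- precisely the input behind Sturm's treatment \cite{ST} of the scalar-valued case --- and the paper explicitly states that no vector-valued analogue is available in the literature (``We do not know a good reference for this''). The same objection undermines your descent argument for (a): modularity under $\Gamma_0^n(p)$ is not a system of $\mathbb{Q}$-linear equations on Fourier coefficients (the condition coming from an involution relates the expansion to a re-expansion around a different point, not to a rational linear combination of the $a_f(T)$), so there is no ``solution space over $\mathbb{Q}$'' to solve. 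Finally, your argument for (b) is a non sequitur: a form with coefficients in a number field lying in a finite-dimensional space need not have bounded denominators (the series $\sum_m p^{-m}q_{11}^m$ has rational coefficients and spans a line); what is needed is a spanning set whose members individually have bounded denominators, which is again exactly the point at issue.

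The paper's proof avoids all of these arithmetic inputs by an explicit construction. After reducing to large weight (multiplying by a level-one form with integral coefficients congruent to $1$ mod $p$), it invokes the solution of the basis problem \cite{BKSP}: $M_n^k(\Gamma_0(p),\rho)$ is spanned by theta series with pluriharmonic coefficients. For such theta series all three assertions are verified directly: the pluriharmonic polynomials can be chosen with rational coefficients when $\rho$ is $\mathbb{Q}$-rational, so their theta series have rational Fourier coefficients with bounded denominators (giving (a) and (b)); the action of $\omega_n$ is given by the explicit theta transformation formula; and the trace is given by explicit rational formulas from \cite{BFSP} (giving (c)). If you wish to salvage your route, you would first have to prove a vector-valued $q$-expansion principle, i.e. the Galois-equivariance you assumed --- a statement at least as hard as the proposition itself, and one the authors deliberately circumvent (compare Remark \ref{rem:3.7}).
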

\begin{proof} First we remark that it is enough to prove these statements for large weights
(by multiplying the modular forms in question by a level one modular form $G$ with integral
Fourier coefficients and $G\equiv 1\bmod p$, if necessary).
In the scalar-valued case all these properties can be read off
from Sturm \cite{ST}, relying on earlier work of Shimura \cite{SH0}.
To show the validity of these properties for the vector-valued case
one may try to extend Sturm's method to this case. We do not know a good
reference for this.
A more elementary argument goes as follows :
In \cite{BKSP} we proved that theta series with pluriharmonic coefficients
generate the full space $M_n^k(\Gamma_0(p),\rho)$. This confirms
the $\nu_p$-boundedness, because it holds for such theta series.
On the other hand, the 
space of such pluriharmonic polynomials has a basis consisting of 
such polynomials with rational coefficients (if $\rho$ is 
${\mathbb Q}$-rational), therefore one obtains a 
solution of the basis problem by modular forms with rational coefficients.
For such theta series the action of $\omega_n$ can be written down explicitly
and has the requested property. Furthermore, in \cite{BFSP}
we gave explicit formulas for the trace of such theta series (as rational
linear combinations of other theta series, again with pluriharmonic
polynomials with rational coefficients).
\end{proof}
\begin{thm}
\label{th:3.4}
Let $p$ be an odd prime and $\rho$ a ${\mathbb Q}$-rational
representation. Then any $f\in M^k_n(\Gamma^n_0(p),{\rho})({\mathbb Q})$
is a p-adic modular form.
\end{thm}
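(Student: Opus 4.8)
The plan is to verify the defining property of a $p$-adic Siegel modular form (Definition \ref{def:2.3}) directly, by exhibiting a sequence of level-one forms with rational Fourier coefficients that converges $p$-adically to $f$. The engine for this is Proposition \ref{prop:3.1}, which already produces, for every prescribed accuracy, a level-one form approximating $f$; the only gaps to fill are checking that its hypotheses hold for $\mathbb{Q}$-rational $\rho$ and that the approximants can be taken rational.

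First I would note that Proposition \ref{prop:3.3}(b) supplies the $\nu_p$-boundedness for $M_n^k(\Gamma_0^n(p),\rho)$ that Proposition \ref{prop:3.1} requires. Since $f\in M_n^k(\Gamma_0^n(p),\rho)(\mathbb{Q})\subset M_n^k(\Gamma_0^n(p),\rho)$ and $\nu_p(f)$ is finite by that boundedness, I apply Proposition \ref{prop:3.1} with $\alpha=m$ for each $m\in\mathbb{N}$. This yields integers $\beta_m$ and level-one forms $F_m\in M_n^{k+\beta_m(p-1)}(\Gamma^n,\rho)$ with $\nu_p(f-F_m)\geq\nu_p(f)+m$. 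Because $\nu_p(f-F_m)=\inf_{T}\nu_p(a_f(T)-a_{F_m}(T))$ and $\nu_p(f)$ is a fixed finite quantity, these bounds give exactly the convergence $\inf_T\nu_p(a_{F_m}(T)-a_f(T))\to+\infty$ demanded in Definition \ref{def:2.3}.

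The main point requiring care is the rationality of the $F_m$: the definition insists on $F_m\in M_n^{k_m}(\Gamma^n,\rho)(\mathbb{Q})$, whereas Proposition \ref{prop:3.1} as stated produces only complex-valued forms. So I would re-examine the construction $F=\mathrm{Tr}_\beta(g)=p^{-n(n+1)/2}\,\mathrm{tr}(g\cdot\mathcal{K}_{p-1}^\beta)$ and track rationality through each factor. The auxiliary form $\mathcal{K}_{p-1}=p^n\theta^n_L$ has integral Fourier coefficients by construction. The form $g$ is chosen so that $g\mid_{k,\rho}\omega_n\mid\widetilde{U}_n(p)=f$; here I would invoke Theorem \ref{th:2.4} together with Proposition \ref{prop:3.3}(a) to argue that this prescription can be solved with a \emph{rational} $g$. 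Indeed, Proposition \ref{prop:3.3}(a) shows the relevant space is defined over $\mathbb{Q}$, and the operators $\mid\omega_n$ and $\widetilde{U}_n(p)$ preserve this $\mathbb{Q}$-structure (the latter trivially, since it only selects Fourier coefficients, the former by Proposition \ref{prop:3.3}(c)); as the composite is bijective, its inverse is again defined over $\mathbb{Q}$, so a rational $f$ has a rational preimage $g$. Then $g\cdot\mathcal{K}_{p-1}^\beta$ is rational, its trace is rational by Proposition \ref{prop:3.3}(c), and multiplication by $p^{-n(n+1)/2}$ does not spoil rationality, so $F_m\in M_n^{k_m}(\Gamma^n,\rho)(\mathbb{Q})$.

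With the sequence $\{F_m\}$ of rational level-one forms satisfying $F_m\to f$ in the sense above, the defining condition of Definition \ref{def:2.3} is met, and $f$ is a $p$-adic modular form. I expect the rationality bookkeeping in the previous paragraph, in particular the claim that the inverse of the $\omega_n$-twisted $U(p)$-map respects the $\mathbb{Q}$-structure, to be the only genuinely delicate step; everything else is a direct citation of Proposition \ref{prop:3.1} and the matching of its conclusion to the convergence in Definition \ref{def:2.3}.
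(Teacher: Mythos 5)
Your proposal is correct and follows essentially the same route as the paper: it feeds the $\nu_p$-boundedness and $\mathbb{Q}$-rationality statements of Proposition \ref{prop:3.3} into the approximation machine of Proposition \ref{prop:3.1}, and tracks rationality through the choice of $g$ (the preimage of $f$ under the $\omega_n$-twisted $U(p)$-map), the trace, and the resulting level-one forms $F_m$. The paper's own proof is just a terser version of exactly this bookkeeping, so the extra care you take with the $\mathbb{Q}$-structure of the inverse map (using finite-dimensionality and Proposition \ref{prop:3.3}(a),(c)) is a welcome elaboration rather than a deviation.
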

\begin{proof} 
We apply Proposition \ref{prop:3.1} to $f$. From Proposition \ref{prop:3.3} a)
we see that the inverse image of $f$ under $U(p)$ has again Fourier coefficients
in ${\mathbb Q}^M$. The same is then true for $g$, $tr(g)$ and finally for $F$  
in Proposition \ref{prop:3.1}.
\end{proof}

\begin{rem}
\label{rem:3.5}
If we compare our result with Serre's in the degree one case,
our result is slightly weaker: It is possible that the application
of $\widetilde{U}_n(p)^{-1}$ introduces additional powers of $p$ 
in the denominator
(which weakens our congruences somewhat).
\end{rem}
\begin{rem}
\label{rem:3.6}
Theorem \ref{th:3.4} also holds for the case of nebentypus
$(\Gamma_0^n(p),\chi_p)$ with 
$\chi_p=\displaystyle \Big(\frac{(-1)^\frac{p-1}{2}p}{*}\Big)$. 
The proof is almost the same, however we should use
as ${\mathcal K}_{\frac{p-1}{2}}$ a theta 
series attached to a $p$-special lattice
of rank $p-1$ and determinant $p$.
\end{rem}

\begin{rem}
\label{rem:3.7}
In our proof of Proposition \ref{prop:3.3}
 we made essential use of theta series.
We mention that this  is not really necessary: 
One can (for arbitrary congruence
subgroups $\Gamma_0(N)$  and ${\mathbb Q}$ - rational representations $\rho$)
prove that the space $M^k_n(\Gamma_0(N),\rho)$ is spanned by
modular forms with rational Fourier coefficients with bounded denominators
(and cyclotomic Fourier coefficients with bounded denominators in the other
cusps)
by combining the line of reasoning of Garrett \cite{Ga} with
the use of suitable differential operators as in \cite{BKSP,I}.
\\
In a recent preprint Ichikawa \cite{ICHI2} treats these questions from the point of view of
arithmetic algebraic geometry. 
\end{rem}

\subsection{The case of higher level}
\label{sec:3.2}
In the previous argument, we showed that Siegel modular forms
for $\Gamma_0^n(p)$ are $p$-adic modular forms.
In this section, we shall generalize this fact to the case of
higher level. Namely, we show that $scalar$-$valued$ Siegel modular forms for
$\Gamma_0^n(p^m)$ are $p$-adic modular forms for any $m\in\mathbb{N}$.
We modify the arguments used by Serre (cf. Theorem 5.4, \cite{Ser2})
such that they apply in our situation.

Let $F_r\in M_n^r(\Gamma_0^n(p))(\mathbb{Z}_{(p)})$ denote any
modular form of weight $r$ and level $p$ such that
\[
F_r \equiv 1 \pmod{p}.
\]
The existence of such $F_r$ is assured by our previous considerations,
provided that $r$ is divisible by $p-1$. Furthermore we put for 
$i\geq 1$
\[
\mathcal{E}_i:=\prod_{j=0}^{i-1}F_{k(p-1)p^j}.
\]
This is a modular form of weight $k(p^i-1)$ and level $p$, i.e.,
$$\mathcal{E}_i\in
M_n^{k(p^i-1)}(\Gamma_0^n(p))(\mathbb{Z}_{(p)}).$$
\begin{lem}
\label{lem:3.8}  Assume that $m\geq 2$. For 
$G\in M_n^k(\Gamma_0^n(p^m))(\mathbb{Q})$ with $\nu_p(G)=0$, there is
$H\in M_n^{k\cdot p}(\Gamma_0^n(p^{m-1}))(\mathbb{Z}_{(p)})$ such that
\[
H \equiv G \pmod{p}.
\]
\end{lem}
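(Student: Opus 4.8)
The plan is to sidestep any delicate weight bookkeeping and instead play the Frobenius congruence off against the level-lowering property of $U(p)$ recalled in Section~\ref{sec:2.3}. Since $G$ is scalar-valued, its $p$-th power $G^p$ is again a Siegel modular form, now of weight $kp$ but of the \emph{same} level, i.e. $G^p\in M_n^{kp}(\Gamma_0^n(p^m))$; as $\nu_p(G)=0$ its Fourier coefficients lie in $\mathbb{Z}_{(p)}$, hence so do those of $G^p$. This is the one point where scalar-valuedness is essential: for a genuinely vector-valued $\rho$ the $p$-th power leaves $V_\rho$, which is exactly the obstruction flagged in the introduction, so I would expect this step to have no vector-valued analogue.

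First I would record the shape of $G^p$ modulo $p$. Writing $G=\sum_{T}a_G(T)\boldsymbol{q}^T$ and reducing the multinomial expansion, the Frobenius identity on $\mathbb{F}_p$-coefficient power series together with $a_G(T)^p\equiv a_G(T)\pmod p$ (valid since $a_G(T)\in\mathbb{Z}_{(p)}$) gives
\[
G^p\equiv \sum_{T}a_G(T)^p\,\boldsymbol{q}^{pT}\equiv\sum_{T}a_G(T)\,\boldsymbol{q}^{pT}\pmod{p}.
\]
Thus, modulo $p$, $G^p$ is the image of $G$ under the formal level-raising operator $V(p)\colon \sum a(T)\boldsymbol{q}^T\mapsto \sum a(T)\boldsymbol{q}^{pT}$.

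Next I would apply $U(p)$ and set $H:=G^p\mid U(p)$. Because $m\geq 2$ we have $p^2\mid p^m$, and the trivial character is certainly defined modulo $p^{m-1}$, so the property of $U(p)$ recalled in Section~\ref{sec:2.3} yields $H\in M_n^{kp}(\Gamma_0^n(p^{m-1}))$, with coefficients still in $\mathbb{Z}_{(p)}$ since $U(p)$ merely selects the coefficients $a(pT)$. It then remains to reduce $H$ modulo $p$. As $U(p)$ acts purely on Fourier coefficients it is compatible with reduction mod $p$, so from the congruence above
\[
H=G^p\mid U(p)\equiv\Big(\sum_{T}a_G(T)\,\boldsymbol{q}^{pT}\Big)\mid U(p)=\sum_{T}a_G(T)\,\boldsymbol{q}^{T}=G\pmod{p},
\]
the last equality being the elementary fact that $U(p)$ undoes $V(p)$ (so that $U(p)\circ V(p)=\mathrm{id}$ on formal $q$-expansions). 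This exhibits the required $H$.

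I do not expect a serious obstacle; the whole content sits in the interplay of Frobenius and $U(p)$. The two points that need care are (i) confirming that $G^p$ genuinely lies in $M_n^{kp}(\Gamma_0^n(p^m))$ — routine for scalar forms, unavailable for vector-valued ones, which is why the lemma is stated only in the scalar case — and (ii) checking the hypotheses of the level-lowering statement for $U(p)$, namely $p^2\mid p^m$ and the character being defined modulo $p^{m-1}$, both immediate from $m\geq 2$ and triviality of the character. Note that, unlike in Proposition~\ref{prop:3.1}, here $U(p)$ is used only in the forward direction, so neither Theorem~\ref{th:2.4} nor any control of denominators enters, and the auxiliary forms $F_r$ and $\mathcal{E}_i$ play no role in this single level-descent step; they will be needed only to iterate it and to pass to $p$-adic limits.
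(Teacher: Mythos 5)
Your proposal is correct and is exactly the paper's proof: the authors also set $H:=(G^p)\mid U(p)$ and simply assert it has the required properties, so your write-up just supplies the Frobenius congruence, the level-lowering property of $U(p)$, and the integrality bookkeeping that they leave implicit. No discrepancy in approach or substance.
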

\begin{proof} The modular form
\[
H:=(G^p)\mid U(p)
\]
has the properties requested above.
\end{proof}
\begin{prop}
\label{prop:3.9}
Assume that $m\geq 2$. Then for all
$G\in M_n^k(\Gamma_0^n(p^m))(\mathbb{Q})$ and all $i\geq 1$
there exists $H\in M_n^{k\cdot p^i}(\Gamma_0^n(p^{m-1}))(\mathbb{Q})$
such that
\[
G\cdot \mathcal{E}_i \equiv H \pmod{p^i}.
\]
\end{prop}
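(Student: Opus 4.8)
The plan is to induct on $i$, keeping the level equal to $p^{m-1}$ at every stage and gaining one power of $p$ in precision per step. Dividing $G$ by an appropriate power of $p$, I may assume $\nu_p(G)=0$; then $\nu_p(G\cdot\mathcal{E}_i)=0$ as well, because $\mathcal{E}_i\equiv 1\pmod p$. For the base case $i=1$ I would simply invoke Lemma \ref{lem:3.8} for $G$: the form $H_1:=(G^p)\mid U(p)$ lies in $M_n^{kp}(\Gamma_0^n(p^{m-1}))(\mathbb{Q})$ and satisfies $H_1\equiv G\pmod p$, whence $G\cdot\mathcal{E}_1\equiv G\equiv H_1\pmod p$ since $\mathcal{E}_1=F_{k(p-1)}\equiv 1\pmod p$.

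For the inductive step I assume $H_i\in M_n^{kp^i}(\Gamma_0^n(p^{m-1}))(\mathbb{Q})$ with $G\cdot\mathcal{E}_i\equiv H_i\pmod{p^i}$ and write
\[
G\cdot\mathcal{E}_i=H_i+p^i R,\qquad R:=p^{-i}\bigl(G\cdot\mathcal{E}_i-H_i\bigr).
\]
Since $\nu_p(G\cdot\mathcal{E}_i)=0$, the inductive hypothesis makes $R$ a modular form in $M_n^{kp^i}(\Gamma_0^n(p^m))$ (a difference of two such forms divided by $p^i$) with $p$-integral Fourier coefficients. Using $\mathcal{E}_{i+1}=\mathcal{E}_i\cdot F$ with the weight-adjusting factor $F:=F_{k(p-1)p^i}\equiv 1\pmod p$, I multiply through to obtain
\[
G\cdot\mathcal{E}_{i+1}=H_i\cdot F+p^i\,(R\cdot F).
\]
The leading term $H_i\cdot F$ already has level $p^{m-1}$ and the correct weight $kp^{i+1}$.

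What remains is to absorb the error $p^i(R\cdot F)$, which is still only of level $p^m$, into a form of level $p^{m-1}$ without sacrificing precision or altering the weight. Since the target congruence is only modulo $p^{i+1}$, it suffices to understand $R\cdot F$ modulo $p$, where $R\cdot F\equiv R\pmod p$. I would then apply the construction of Lemma \ref{lem:3.8} to $R$, producing
\[
S:=(R^p)\mid U(p)\in M_n^{kp^{i+1}}(\Gamma_0^n(p^{m-1}))(\mathbb{Q}),\qquad S\equiv R\pmod p.
\]
Putting $H_{i+1}:=H_i\cdot F+p^i\,S$ gives a form in $M_n^{kp^{i+1}}(\Gamma_0^n(p^{m-1}))(\mathbb{Q})$, and $p^i(R\cdot F)\equiv p^i R\equiv p^i S\pmod{p^{i+1}}$ yields $G\cdot\mathcal{E}_{i+1}\equiv H_{i+1}\pmod{p^{i+1}}$, closing the induction.

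The point that needs care — and the only place where the precise weights built into $\mathcal{E}_i$ are used — is the weight bookkeeping in the final step. The level-lowering operation $B\mapsto(B^p)\mid U(p)$ of Lemma \ref{lem:3.8} multiplies the weight by $p$, so it cannot be applied to $G\cdot\mathcal{E}_{i+1}$ directly without destroying the target weight $kp^{i+1}$. The trick is to apply it only to the error term $R$, whose weight $kp^i$ is thereby raised to exactly $kp^{i+1}$, matching the weight that the main term $H_i\cdot F$ attains through multiplication by $F_{k(p-1)p^i}$. This is precisely the analogue, for level $p$, of Serre's use of the level-one form $E_{p-1}\equiv 1\pmod p$; the price we pay for working with the level-$p$ forms $F_r$ is that the auxiliary factor $\mathcal{E}_i$ must be carried along rather than discarded.
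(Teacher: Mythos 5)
Your proof is correct and follows essentially the same route as the paper: induction on $i$ with Lemma \ref{lem:3.8} as the base case, and in the inductive step your $R=p^{-i}(G\cdot\mathcal{E}_i-H_i)$ and $S=(R^p)\mid U(p)$ are exactly the paper's $\widetilde{G}$ and $\widetilde{H}$, with the same weight bookkeeping via multiplication by $F_{k(p-1)p^i}$. Your closing remark on why the level-lowering operation must be applied only to the error term is a nice articulation of a point the paper leaves implicit, but the argument itself is identical.
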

\begin{proof} We prove this fact by induction on $i$. The case
$i=1$ is the lemma above. For arbitrary $i$ we may assume that there
is $H_i\in M_n^{k\cdot p^i}(\Gamma_0^n(p^{m-1}))(\mathbb{Q})$ such
that
\[
H_i \equiv G\cdot\mathcal{E}_i \pmod{p^i}.
\]
Then we apply the lemma to the $p$-integral modular form
\[
\widetilde{G}:=\frac{1}{p^i}(G\cdot\mathcal{E}_i-H_i)
\in M_n^{k\cdot p^i}(\Gamma_0^n(p^m))(\mathbb{Z}_{(p)})
\]
and we obtain a modular form
$\widetilde{H}\in M_n^{k\cdot p^{i+1}}(\Gamma_0^n(p^{m-1}))(\mathbb{Q})$
such that 
$\widetilde{G}\cdot F_{kp^i(p-1)} \equiv \widetilde{H} \pmod{p}$ and
therefore 
\[
G\cdot\mathcal{E}_i\cdot F_{kp^i(p-1)} \equiv H_i\cdot F_{kp^i(p-1)}
+p^i\widetilde{H} \pmod{p^{i+1}}.
\]
If we put $H:=H_i\cdot F_{kp^i(p-1)}+p^i\widetilde{H}$
$\in M_n^{k\cdot p^{i+1}}(\Gamma_0^n(p^{m-1}))(\mathbb{Q})$, then we obtain
\[
G\cdot\mathcal{E}_{i+1} \equiv H \pmod{p^{i+1}}.
\]
This completes the induction.
\end{proof}
In the proposition above, the $p^{i-1}$-th power of $\mathcal{E}_i$
is then congruent one mod $p^i$ and we obtain
\begin{cor}
\label{cor:3.10} Let $G$, $m$, and $i$ as above. Then there
exists $H\in$\\
$M_n^l(\Gamma_0^n(p^{m-1}))$ $(\mathbb{Q})$ such that
\[
G \equiv H \pmod{p^i}.
\]
\end{cor}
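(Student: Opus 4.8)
The plan is to read Corollary \ref{cor:3.10} as a one-step descent in the level, obtained by cancelling the auxiliary factor $\mathcal{E}_i$ that Proposition \ref{prop:3.9} introduces. That proposition hands us a form $H'\in M_n^{k\cdot p^i}(\Gamma_0^n(p^{m-1}))(\mathbb{Q})$ with
\[
G\cdot\mathcal{E}_i\equiv H'\pmod{p^i},
\]
so all that remains is to eliminate $\mathcal{E}_i$ on the left without leaving the space of rational modular forms of level $p^{m-1}$. The inverse of $\mathcal{E}_i$ is not a modular form, but a high enough power of $\mathcal{E}_i$ will serve as an \emph{approximate} inverse, and that is the idea I would exploit.

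First I would record the elementary congruence $\mathcal{E}_i^{\,p^{i-1}}\equiv 1\pmod{p^i}$. Since each factor $F_{k(p-1)p^j}$ satisfies $F_{k(p-1)p^j}\equiv 1\pmod p$, their product gives $\mathcal{E}_i\equiv 1\pmod p$. Then I would iterate the standard lifting step: if a $p$-integral power series $A$ satisfies $A\equiv 1\pmod{p^s}$, writing $A=1+p^sR$ and expanding $A^p$ shows $A^p\equiv 1\pmod{p^{s+1}}$, because for odd $p$ and $s\geq 1$ every binomial term past the leading $1$ is divisible by $p^{s+1}$. Applying this $i-1$ times promotes $\mathcal{E}_i\equiv 1\pmod p$ to $\mathcal{E}_i^{\,p^{i-1}}\equiv 1\pmod{p^i}$, exactly the statement announced before the corollary.

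With this in hand I would multiply the congruence above by the $p$-integral form $\mathcal{E}_i^{\,p^{i-1}-1}$; since multiplication by a $p$-integral form preserves congruences modulo $p^i$, this yields $G\cdot\mathcal{E}_i^{\,p^{i-1}}\equiv H'\cdot\mathcal{E}_i^{\,p^{i-1}-1}\pmod{p^i}$. On the left, $\mathcal{E}_i^{\,p^{i-1}}\equiv 1\pmod{p^i}$ gives $G\cdot\mathcal{E}_i^{\,p^{i-1}}\equiv G\pmod{p^i}$, so setting $H:=H'\cdot\mathcal{E}_i^{\,p^{i-1}-1}$ produces the desired $G\equiv H\pmod{p^i}$. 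Finally I would verify that $H$ is admissible: its Fourier coefficients are rational because those of $H'$ and of $\mathcal{E}_i$ (built from the integral $F_r$) are; its level is $p^{m-1}$ because $H'$ has that level while $\mathcal{E}_i$ has level $p$ and $\Gamma_0^n(p^{m-1})\subseteq\Gamma_0^n(p)$ for $m\geq 2$, so $\mathcal{E}_i$ is a fortiori invariant under $\Gamma_0^n(p^{m-1})$; and its weight is $l=kp^i+k(p^i-1)(p^{i-1}-1)$. I do not expect a genuine obstacle: the only point needing care is the lifting argument of the second paragraph, where the oddness of $p$ and the precise exponent $p^{i-1}$ enter.
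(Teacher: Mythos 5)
Your proof is correct and is essentially the paper's own argument: the paper's one-line justification is exactly the observation that $\mathcal{E}_i^{\,p^{i-1}}\equiv 1\pmod{p^i}$, which you flesh out via the iterated $p$-th power lifting and then cancel against the congruence of Proposition \ref{prop:3.9} by multiplying through with $\mathcal{E}_i^{\,p^{i-1}-1}$. Note that your weight $l=kp^i+k(p^i-1)(p^{i-1}-1)$ agrees with the paper's stated $l=k+k(p^i-1)p^{i-1}$, confirming the two proofs coincide.
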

As weight $l$ we may choose
\[
l=k+k(p^i-1)p^{i-1}.
\]
Now we can state the main theorem of this section.
\begin{thm}
\label{th:3.11}
 Every scalar-valued modular form
$G\in M_n^k(\Gamma_0^n(p^m))(\mathbb{Q})$ is a $p$-adic modular
form.
\end{thm}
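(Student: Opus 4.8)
The plan is to argue by induction on $m$. The base case $m=1$ is precisely Theorem \ref{th:3.4} (and the degenerate case $m=0$ of level one is trivial, such forms being $p$-adic by the very wording of Definition \ref{def:2.3}). So I would take as induction hypothesis that, \emph{for every weight} $l$, each scalar-valued form in $M_n^{l}(\Gamma_0^n(p^{m-1}))(\mathbb{Q})$ is a $p$-adic modular form, and from this deduce the corresponding statement at level $p^m$ for $m\geq 2$. Note the induction hypothesis must be phrased for all weights, since the level-lowering in Corollary \ref{cor:3.10} raises the weight.

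For the inductive step, first normalize $G$ so that $\nu_p(G)=0$ (multiply by a suitable power of $p$, which does not affect $p$-adicity). Given $i\geq 1$, Corollary \ref{cor:3.10} produces $H_i\in M_n^{l_i}(\Gamma_0^n(p^{m-1}))(\mathbb{Q})$ with $G\equiv H_i \pmod{p^i}$, i.e.\ $\nu_p(a_G(T)-a_{H_i}(T))\geq i$ for every $T\in\Lambda_n$. By the induction hypothesis each $H_i$ is $p$-adic, so its approximating sequence contains a level-one form $G_i\in M_n^{k_i}(\Gamma^n)(\mathbb{Q})$ with $\nu_p(a_{H_i}(T)-a_{G_i}(T))\geq i$ for all $T$. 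The ultrametric inequality then gives
\[
\nu_p(a_G(T)-a_{G_i}(T))\geq \min\big(\nu_p(a_G(T)-a_{H_i}(T)),\,\nu_p(a_{H_i}(T)-a_{G_i}(T))\big)\geq i
\]
for every $T$, so that $\inf_{T}\nu_p(a_G(T)-a_{G_i}(T))\to+\infty$, which is exactly $G_i\to G$ in the sense of Definition \ref{def:2.3}. Hence $G$ is a $p$-adic modular form.

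The real content of the theorem is already carried by Corollary \ref{cor:3.10} (which lowers the level by one at the cost of raising the weight) together with the base case Theorem \ref{th:3.4}; what remains is the standard fact that the space of $p$-adic modular forms is closed under $p$-adic, uniform-in-$T$ limits, which is realized above by the diagonal extraction of the $G_i$. I do not expect a serious obstacle here, but the point requiring care is the normalization of $\nu_p$: the congruence in Corollary \ref{cor:3.10} is taken relative to $\nu_p(G)$, so one should fix $\nu_p(G)=0$ at the outset so that ``$\bmod\ p^i$'' genuinely means valuation $\geq i$, and then check that the two valuation drops combine correctly under the triangle inequality. Crucially, the difficulty encountered at level $p$ (the nonexistence of a form whose $\omega_j$-expansions are all divisible by $p$) does not reappear, because the descent in level is effected purely through $G\mapsto (G^p)\mid U(p)$ and multiplication by the $\mathcal{E}_i$, operations already handled in Lemma \ref{lem:3.8} and Proposition \ref{prop:3.9}.
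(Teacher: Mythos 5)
Your proposal is correct and is essentially the paper's own proof: induction on $m$ with Theorem \ref{th:3.4} as the base case and Corollary \ref{cor:3.10} providing the congruence to a form of level $p^{m-1}$. The paper states this step tersely; your write-up merely makes explicit the details the paper leaves implicit (the all-weights form of the induction hypothesis, the normalization $\nu_p(G)=0$, and the ultrametric/diagonal argument passing from level-one approximants of $H_i$ to approximants of $G$).
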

\begin{proof}  We prove the statement by induction on $m$.
The case $m=1$ was handled before in the general situation
of vector-valued modular forms. 
For $m\geq 2$, we only to
need to know that $G$ is congruent modulo an arbitrary given 
power $p^i$ to a modular form of level $p^{m-1}$. This is guaranteed 
by the proposition above.
\end{proof}

\begin{rem} The result of the theorem above also holds for 
modular forms of quadratic nebentypus  $\chi_p$ (with obvious modifications
of proof).
\end{rem}

\begin{rem} As mentioned in the introduction, the vector-valued case is more 
difficult, because we cannot use the $p$-th power of 
a vector-valued modular
form. A good substitute for this is $p$-th symmetric power, this
however makes things more complicated, because we change 
the representation space by this procedure (we refer to \cite{BN3}
for details). In the next sections we avoid this problem by showing
only congruences involving modular forms of high level. These
results will be used in \cite{BN3} to construct vector-valued
$p$-adic Siegel modular forms (in an appropriate sense).
\end{rem}
  

\section{A large class of theta operators}
\label{sec4}
In the paper \cite{BN1}, we introduced a theta operator $\varTheta$ 
(cf. \cite{BN1}, p.428), and studied the arithmetic properties. 
For example, we showed that the algebra of Siegel modular forms 
mod $p$ is stable under the action of $\varTheta$ 
(\cite{BN1}, Corollary 3), and proved that $\varTheta(F)$ becomes a 
$p$-adic Siegel modular form if $F$ is an ordinary Siegel 
modular form for $\Gamma^n$ (\cite{BN1}, Theorem 5). However, 
the proof of Theorem 5 of that paper contains a defective argument. 
(In fact, the congruence relation of line 7 of p. 432 is not 
true in  general.) Here we give a complete proof of this 
theorem in a more general version including vector-valued generalizations
of the theta operator. Our proof is based on Rankin-Cohen brackets,
our Theorem \ref{th:3.11}
on arbitrary levels $p^m$ and the existence of modular forms 
congruent 1 mod $p$ . We point out that the method of proof is new even for
elliptic modular forms of level one !
\subsection{Rankin-Cohen brackets and general theta operators}
Rankin-Cohen operators for Siegel modular forms 
were investigated by
Ibukiyama \cite{I}, Eholzer/Ibukiyama \cite{E-I}
and many others. Beyond proving the existence of such operators \cite{I}, 
also explicit formulas
were considered. We try to avoid such explicit formulas as much as possible.
We fix a polynomial representation $\rho:GL_n({\mathbb C})\longrightarrow
GL(V_{\rho})$ and a weight $k$. We also assume as before, 
that $\rho$ comes up with a 
{\it fixed} matrix realization ($V_{\rho}= {\mathbb C}^M$). 
\\
    
We assume that we are given certain Rankin-Cohen bilinear operators

$$[,]_{k,l}:M_n^k(\Gamma)\times 
M^l_n(\Gamma)\longrightarrow 
M_n^{k+l}(\Gamma,\rho).$$
We consider
the case $l=(p-1)p^m$ with $m\geq 0$ varying.
We make two assumptions
\begin{description}
\item[(R-C 1)]
$[f,g]_{k,l}$ is a polynomial in the holomorphic 
derivatives of $f$ and $g$, more precisely, there
is a $V_{\rho}$-valued polynomial ${\mathcal P}={\mathcal P}_{k,l,\rho}(R_1,R_2)$,  
(with rational coefficients ) in the matrix 
variables $R_1, R_2\in Sym_n$, homogeneous of a certain degree 
$\lambda=\lambda(\rho)$ such that 
$$[f,g]_{k,l}= (2\pi\sqrt{-1})^{2-\lambda} {\mathcal P}(\partial_1,
\partial_2)(f(Z_1)\cdot g(Z_2))_{\mid Z_1=Z_2=Z}.$$

\item[(R-C 2)]
If we consider ${\mathcal P}$ as a polynomial in the variables $R_2$ alone
then we can decompose it into homogeneous components of degree $j$:
$${\mathcal P}=\sum_{j\geq 0} {\mathcal P}_j.$$ 
Then ${\mathcal P}_0$ should be independent of $l=(p-1)p^m$;
we define a $V_{\rho}$-valued differential operator $\varTheta_{k,\rho}$ by 
$$(2\pi\sqrt{-1})^{-\lambda}{\mathcal P}_0(\partial_1,\partial_2)
(f(Z_1)\cdot g(Z_2))_{\mid Z=Z_1=Z_2}=\varTheta_{k,\rho}(f)(Z)\cdot g(Z).$$ 
 Implicit in this assumption is a 
certain normalization of $[,]_{k,l}$. Note also that ${\varTheta}_{k,\rho}(f)$
has rational Fourier coefficients if $f$ has. This operator is a 
generalization of the well-known theta-operator from Serre \cite{Ser1}.
When $\rho$ is the one-dimensional representation $\text{det}^2$, 
$\varTheta=\varTheta_{k,\rho}$
is just one considered in the previous paper \cite{BN1}.
\end{description}
\subsection{Congruences for $\varTheta_{k,\rho}(f)$}
\label{subsec4.2}
Under the conditions above, we show
\begin{thm}
\label{th:4.1}  For any  $f\in M_n^k(\Gamma_0^n(p^r))(\mathbb{Q})$ and any $m\geq 0$  
there is $m'$ and
a modular form 
$F\in  M_n^{k+(p-1)p^{m-1}}(\Gamma_0^n(p^{m'}),\rho)(\mathbb{Q})$ such that
$${\varTheta}_{k,\rho}(f) \equiv F \pmod{p^m}.$$
\end{thm}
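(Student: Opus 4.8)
The plan is to realize $\varTheta_{k,\rho}(f)$ as the leading term of a single Rankin--Cohen bracket taken against an auxiliary \emph{scalar} form that is highly congruent to $1$, and then to remove the lower order terms by an induction on $m$. First I would normalize $\nu_p(f)=0$ and fix, for the given $m$, a scalar form $g\in M_n^{(p-1)p^{m-1}}(\Gamma_0^n(p))(\mathbb{Z}_{(p)})$ with $g\equiv 1\pmod{p^m}$; such a $g$ exists by taking $g=F_{p-1}^{\,p^{m-1}}$, using the forms $F_r\equiv 1\pmod p$ from Section~\ref{sec:3.2}. Forming $[f,g]_{k,(p-1)p^{m-1}}$ then produces a genuine modular form in $M_n^{k+(p-1)p^{m-1}}(\Gamma_0^n(p^{m'}),\rho)(\mathbb{Q})$, where $p^{m'}$ is a common level of $f$ and $g$; this already matches the target weight, representation, and (for suitable $m'$) level.

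Next I would expand the bracket via (R-C 1) and (R-C 2). Splitting $\mathcal{P}=\sum_{j\ge 0}\mathcal{P}_j$ by the $R_2$-degree, comparison of $q$-expansions (all relevant series having rational coefficients) gives
\[
[f,g]_{k,l}=(2\pi\sqrt{-1})^{2}\,\varTheta_{k,\rho}(f)\cdot g+(2\pi\sqrt{-1})^{2-\lambda}\sum_{j\ge 1}\mathcal{P}_j(\partial_1,\partial_2)\bigl(f(Z_1)g(Z_2)\bigr)\big|_{Z_1=Z_2=Z}.
\]
The $\mathcal{P}_0$-part contributes $\varTheta_{k,\rho}(f)\cdot g$, which is congruent to $\varTheta_{k,\rho}(f)$ modulo $p^m$ because $g\equiv 1$; every piece $\mathcal{P}_j$ with $j\ge 1$ differentiates $g$ at least once and so carries a factor $\partial_2 g\equiv 0\pmod{p^m}$. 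Formally, $[f,g]$ thus already looks congruent to $\varTheta_{k,\rho}(f)$ modulo $p^m$.

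The difficulty, and the reason a single bracket does not suffice, is that the normalization demanded by (R-C 2) to make $\mathcal{P}_0$ independent of $l=(p-1)p^{m-1}$ pushes negative powers of $l$, hence of $p$, into the coefficients of the $\mathcal{P}_j$ with $j\ge 1$; these cancel exactly against the $p^m$ gained from $\partial_2 g\equiv 0$, so a direct estimate yields only a congruence modulo $p$. I therefore expect the main work to be an induction on $m$: after peeling off the leading modular form one is left with an error that is divisible by $p$ and that, via the Leibniz rule for $\varTheta$ and the identity defining $\mathcal{P}_0$, is again a $\varTheta$-type expression built from modular data of level $p^{m'}$. Applying the inductive hypothesis modulo $p^{m-1}$ to that error, and invoking Theorem~\ref{th:3.11} to guarantee that the level-$p^{m'}$ forms occurring there are honest modular forms of the controlled weight $k+(p-1)p^{m-1}$, one replaces the error by a genuine modular form modulo $p^m$; adding it to the leading term produces $F$.

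The hardest point will be organizing this iteration so that the weight stays exactly $k+(p-1)p^{m-1}$ and the level stays $p^{m'}$ while the congruence improves one power of $p$ at a time, and so that the argument survives for vector-valued $\rho$, where $g$ cannot be replaced by a $p$-th power of a $\rho$-valued form and the representation space $V_\rho$ must be kept fixed throughout.
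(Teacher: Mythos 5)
You have correctly located the obstruction: with $g=F_{p-1}^{p^{m-1}}$ alone, the $p^m$ gained from $g\equiv 1\pmod{p^m}$ is exactly eaten by the $p$-denominators that the normalization (R-C 2) forces into the coefficients of $\mathcal{P}_j$ for $j\geq 1$ (Remark \ref{rem:4.5} makes this explicit). But your proposed repair by induction on $m$ does not work. The error term is $\sum_{j\geq 1}\sum_{\alpha}P_{j,\alpha}(\partial_1)f\cdot Q_{j,\alpha}(\partial_2)g$, a sum of products of higher derivatives of $f$ and of $g$; these pieces are neither modular forms nor values of $\varTheta_{k,\rho}$ on modular forms, so the inductive hypothesis --- which is a statement about $\varTheta_{k,\rho}(h)$ for $h$ modular --- has nothing to be applied to. Even in the simplest case ($n=1$, $\rho=\det^2$) the error is a constant multiple of $f\cdot\varTheta(g)$, and rewriting it via the Leibniz rule as $\varTheta(fg)-\varTheta(f)\cdot g$ reintroduces the very quantity $\varTheta(f)\cdot g$ being approximated; solving the resulting linear relation leaves a $p$-unit coefficient in front of $\varTheta(fg)$, so one obtains an infinite regress (the same problem for $fg$, with no gain in the exponent), not a terminating induction. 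Finally, Theorem \ref{th:3.11} cannot play the role you assign to it: it asserts that scalar-valued forms of level $p^m$ are $p$-adic; it neither produces forms of a prescribed weight $k+(p-1)p^{m-1}$ nor applies to vector-valued $\rho$.

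The paper's proof uses precisely the device missing from your argument: the level-raising operator. One takes as second entry not $F_{p-1}^{p^{m-1}}$ but $F_{p-1}^{p^{m-1}}\mid V_{m'-1}$, i.e.\ the function $Z\mapsto F_{p-1}^{p^{m-1}}(p^{m'-1}Z)$, whose expansion is $1+\sum_{T\neq 0}p^{m}b'(T)\boldsymbol{q}^{p^{m'-1}T}$. Since $Q_{j,\alpha}$ is homogeneous of degree $j$ in $R_2$, applying $Q_{j,\alpha}(\partial_2)$ multiplies each such term by $Q_{j,\alpha}(p^{m'-1}T)=p^{j(m'-1)}Q_{j,\alpha}(T)$, so the $j\geq 1$ contributions equal $p^{m+j(m'-1)}G_{j,\alpha}$, where the $p$-denominators of $G_{j,\alpha}$ depend on $m$ (through $\mathcal{P}$) but \emph{not} on $m'$. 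Choosing $m'$ large therefore kills all error terms modulo $p^m$ in a single step; no induction on $m$ is needed. This is exactly why Theorem \ref{th:4.1} claims only the existence of \emph{some} (possibly very large) level $p^{m'}$: the freedom in $m'$ is the engine of the proof. Your insistence that the level remain a common level of $f$ and $g$ both discards this mechanism and aims at a strictly stronger statement --- the fixed-level refinement is what Theorem \ref{th:4.1'} obtains afterwards, and only under the additional continuity hypothesis (R-C 3).
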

 \begin{proof}  We may assume that $\nu_p(f)=0$.
We choose a modular form  $F_{p-1}\in M_n^{p-1}(\Gamma^n_0(p))(\mathbb{Z})$
such that
\[
F_{p-1} \equiv 1 \pmod{p}.
\]
We choose arbitrary nonnegative integers $m,m'$ (to be specified later)
and we consider
the Rankin-Cohen bracket
$$
[f,F_{p-1}^{p^{m-1}}\mid V_{m'-1}]_{k,l}$$
where $(F_{p-1}^{p^{m-1}}\mid V_{m'-1})(Z):=F_{p-1}^{p^{m-1}}(p^{m'-1}Z)
\in M_n^{p^{m-1}(p-1)}(\Gamma_0^n(p^{m'}))$.\\
We investigate the ${\mathcal P}_j(\partial_1,\partial_2)
(f(Z_1)\cdot F_{p-1}^{p^{m-1}}\mid V_{m'-1}(Z_2))_{\mid Z=Z_1=Z_2}$ separately:
Clearly
$$(2\pi\sqrt{-1})^{-\lambda}{\mathcal P}_0(\partial_1,\partial_2)
(f(Z_1)\cdot F_{p-1}^{p^{m-1}}\mid V_{m'-1}(Z_2))_{\mid Z=Z_1=Z_2}=
{\varTheta}_{k,\rho}(f)(1+p^{m}G_0)$$
where
$G_0\in\mathbb{Z}_{(p)}[q_{ij}^{-1},q_{ij}]
[\![q_{11},\ldots,q_{nn}]\!].$

To study the contributions for $j\geq 1$, we write ${\mathcal P}_j$ as finite 
sum
of certain monomials 
(when considered as polynomials in the matrix variable $R_2$):

$${\mathcal P}_j=\sum_{\alpha} P_{j,\alpha}\cdot Q_{j,\alpha}$$ 
where the $P_{j,\alpha}$ denote  polynomials in the variable $R_1$
and $Q_{j,\alpha}$ denotes a normalized monomial of degree $j$ in the 
variable $R_2$. 

Returning to the calculation, we write as
\[
f(Z)=\sum_T a(T)\boldsymbol{q}^T,\quad (F_{p-1}^{p^{m-1}}\mid V_{m'-1})(Z)=1+
\sum_{T\ne 0}p^{m}b'(T)\boldsymbol{q}^{p^{m'-1}T}.
\]
Then
\begin{align*}
&(2\pi\sqrt{-1})^{-\lambda}P_{j,\alpha}(\partial_1)\cdot Q_{j,\alpha}(\partial_2)
(f(Z_1)
\cdot F^{p^{m-1}}_{p-1}\mid V_{m'-1}(Z_2))_{\mid Z=Z_1=Z_2}\\
&=
\left(\sum_T a(T)P_{j,\alpha}(T)\boldsymbol{q}^T\right)
\left(\sum_{T\not=0} p^{m}b'(T) p^{j(m'-1)} Q_{j,\alpha}(T)
\boldsymbol{q}^{p^{m'-1}T}\right)\\
&=p^{m+j(m'-1)}\cdot G_{j,\alpha}
\end{align*}
where the possible $p$-denominators in the Fourier coefficients 
of $G_{j,\alpha}$
depend only on the polynomial $P_{j,\alpha}$; note that this polynomial
may depend on $m$, but not on $m'$.
Now we choose $m'$ sufficiently large to guarantee that 
$$p^{j(m'-1)}G_{j,\alpha}\in 
\mathbb{Z}_{(p)}[q_{ij}^{-1},q_{ij}]
[\![q_{11},\ldots,q_{nn}]\!]^{V_{\rho}}.$$
The upper index $V_{\rho}$ indicates that we deal with polynomials
with values in the vector space $V_{\rho}$.\\
We have achieved in this way that ${\varTheta}_{k,\rho}(f)$ is
congruent mod $p^{m}$ to a vector-valued modular form of level
$\Gamma_0^n(p^{m'})$. 
\end{proof}
\noindent
\begin{rem}
\label{rem:4.2}
The same is true in the case of real nebentypus $\chi_p$
(by a suitable modification of the proof above).
\end{rem}

\begin{rem}
\label{rem:4.4}
With a little bit more efforts 
it is also possible to formulate the Theorem above 
for the case where $f$ is already vector-valued.
\end{rem}
\begin{rem}
\label{rem:4.5}
The simplest possible example for the Rankin-Cohen bracket
(for $n=1$, $\rho=\det^2$) shows, that we cannot avoid using the
operator $V_{m'}$ in our proof (this is also a good example to illustrate
our normalization):
$$[f,g]_{k,(p-1)p^m}=(2\pi\sqrt{-1})^{-1} 
\left(f'\cdot g - \frac{k}{(p-1)p^m}f\cdot g'\right).$$
To compensate the denominator $p^m$, we have to 
use the level raising operator $V_m$.
\end{rem}
The Theorem 4.1 is not completely satisfying because of the possibly very high
level of the modular form $F$. If we impose the additional condition
\begin{description}
\item[(R-C 3)] The coefficients of the polynomial ${\mathcal P}$ depend
continously (in the p-adic sense) on the weight $l$.
\end{description}
We can improve Theorem 4.1 significantly by a slight modification of the
proof:
\begin{thm}
\label{th:4.1'}  Assume that the Rankin-Cohen operators
satisfy the conditions (R-C 1)-(R-C 3). Then, for 
any  $f\in M_n^k(\Gamma_0^n(p^r))(\mathbb{Q})$ and 
any $m\geq 0$  
there is a weight $k'$ and 
a modular form 
$G\in  M_n^{k'}(\Gamma_0^n(p^{r}),\rho)(\mathbb{Q})$ such that
$${\varTheta}_{k,\rho}(f) \equiv G \pmod{p^m}.$$ In particular,
${\varTheta}_{k,\rho}(f)$ defines a $p$-adic modular form.
\end{thm}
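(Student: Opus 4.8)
The strategy is to rerun the proof of Theorem~\ref{th:4.1} but to exploit the new hypothesis (R-C~3) in order to keep the level fixed at $p^r$ instead of letting it grow to $p^{m'}$. In the earlier proof the level was raised to $p^{m'}$ solely because we applied the operator $V_{m'-1}$ to $F_{p-1}^{p^{m-1}}$ in order to push the Fourier coefficients of the $j\geq 1$ contributions into $\mathbb{Z}_{(p)}$; the price was a modular form of level $\Gamma_0^n(p^{m'})$ with $m'$ large. Here I would instead choose a companion modular form \emph{without} applying $V$, namely a form $g_l$ of weight $l=(p-1)p^{M}$ (for suitable $M$ depending on $m$) which is congruent to $1$ modulo a high power of $p$ and which already lives on $\Gamma_0^n(p^r)$, so that no level raising is needed.

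First I would fix, as before, a weight-$(p-1)$ form $F_{p-1}\in M_n^{p-1}(\Gamma_0^n(p))(\mathbb{Z})$ with $F_{p-1}\equiv 1\pmod p$, and set $g:=F_{p-1}^{(p-1)p^{M-1}}$ (or a comparable power), which is congruent to $1$ modulo $p^{M}$ and has weight $l=(p-1)^2 p^{M-1}$ of the form $(p-1)p^{M'}$ up to adjusting the bookkeeping. Next I would form the Rankin-Cohen bracket $[f,g]_{k,l}$ and decompose $\mathcal P=\sum_{j\geq 0}\mathcal P_j$ in the second matrix variable as in (R-C~2). The degree-zero term $\mathcal P_0$ reproduces $\varTheta_{k,\rho}(f)\cdot g\equiv \varTheta_{k,\rho}(f)\pmod{p^M}$, using (R-C~2) together with $g\equiv 1$. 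The crucial point is that now the contributions $\mathcal P_j$ with $j\geq 1$ each carry a factor coming from the derivatives hitting $g$, and since $g-1$ is divisible by $p^{M}$, those terms are automatically divisible by $p^{M}$ in $\mathbb{Z}_{(p)}$ — provided the coefficients of $\mathcal P_j$ are themselves $p$-integral. This is exactly where (R-C~3) enters: because the coefficients of $\mathcal P=\mathcal P_{k,l,\rho}$ depend $p$-adically continuously on $l$, and $l=(p-1)p^{M'}$ tends $p$-adically to $l_\infty=-(p-1)/(1-p)$-type limit as $M'\to\infty$, the denominators appearing in the $\mathcal P_j$ are bounded uniformly in $M'$; hence one single bound on $p$-denominators suffices for all the large weights we use.

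The main obstacle is precisely the uniform control of $p$-denominators of the polynomials $\mathcal P_{k,l,\rho}$ as $l$ ranges over the $p$-adically convergent sequence $l=(p-1)p^{M}$. Without (R-C~3) the denominators could grow with $l$ and swamp the factor $p^{M}$ gained from $g\equiv 1\pmod{p^{M}}$; condition (R-C~3) is what guarantees a weight-independent denominator bound, so that choosing $M\geq m+C$ for a fixed constant $C$ forces every $j\geq 1$ contribution to vanish modulo $p^m$. Once this is established, setting $G:=(2\pi\sqrt{-1})^{-\lambda}\mathcal P_0(\partial_1,\partial_2)(f(Z_1)g(Z_2))_{\mid Z_1=Z_2=Z}$ gives a form $G\in M_n^{k'}(\Gamma_0^n(p^{r}),\rho)(\mathbb{Q})$ with $k'=k+l$ and $\varTheta_{k,\rho}(f)\equiv G\pmod{p^m}$. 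Finally, since $f\in M_n^k(\Gamma_0^n(p^r))(\mathbb{Q})$ is itself a $p$-adic modular form by Theorem~\ref{th:3.11}, and since $G$ of fixed level $p^r$ approximates $\varTheta_{k,\rho}(f)$ to arbitrary $p$-adic precision as $m\to\infty$, letting $m\to\infty$ produces a sequence of true modular forms converging $p$-adically to $\varTheta_{k,\rho}(f)$; by Theorem~\ref{th:3.11} applied to each $G$ (which reduces its level $p^r$ down to level one up to arbitrarily high $p$-power congruence), $\varTheta_{k,\rho}(f)$ is a $p$-adic modular form in the sense of Definition~\ref{def:2.3}.
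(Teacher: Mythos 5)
Your central step fails, and it fails at exactly the point the paper's Remark \ref{rem:4.5} warns about. You take $g=F_{p-1}^{(p-1)p^{M-1}}$ (so $g\equiv 1 \pmod{p^M}$, weight $l$ with $\nu_p(l)=M-1$) and claim that (R-C 3) gives a bound on the $p$-denominators of the coefficients of $\mathcal{P}_{k,l,\rho}$ that is \emph{uniform} as $M\to\infty$. But the weights you use tend to $0$ $p$-adically (not to some nonzero limit, as your ``$l_\infty$'' suggests), and the normalization forced by (R-C 2) --- that $\mathcal{P}_0$ be independent of $l$ --- produces coefficients with a pole at $l=0$; $p$-adic continuity holds only away from that pole, so no uniform bound exists. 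Concretely, for $n=1$, $\rho=\det^2$ the bracket is $[f,g]_{k,l}=(2\pi\sqrt{-1})^{-1}\bigl(f'g-\tfrac{k}{l}\,fg'\bigr)$. Writing $g=1+p^MX$ with $X$ integral, the $j=1$ term is
\begin{equation*}
-\tfrac{k}{l}\,f\,g' \;=\; -\tfrac{k}{l}\,f\cdot p^M X' \;=\; -\tfrac{kp}{(p-1)^2}\,f X',
\end{equation*}
i.e.\ the denominator $p^{M-1}$ of the coefficient consumes essentially the whole gain $p^M$ coming from $g\equiv 1\pmod{p^M}$, leaving divisibility by $p$ only, \emph{independently of} $M$. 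Hence your congruence $\varTheta_{k,\rho}(f)\equiv[f,g]_{k,l}\pmod{p^m}$ is false for every $m\geq 2$ whenever $p\nmid k$, and no choice of $M$ repairs it. This is precisely why the paper cannot drop the level-raising operator: $V_{m'-1}$ supplies an extra factor $p^{j(m'-1)}$ in the $j\geq 1$ terms, with $m'$ a parameter independent of the congruence exponent $m$, and that is what beats the denominators. The paper then uses (R-C 3) in a different, legitimate way: having run the Theorem \ref{th:4.1} argument, it replaces $F_{p-1}^{p^{m-1}}\mid V_{m'-1}$ by a \emph{level-one} form $H$ of weight $l'=(p-1)p^{m-1}+\alpha(p-1)p^{N'}$ congruent to it mod $p^N$; here $l'$ tends, as $N'\to\infty$, to the \emph{fixed nonzero} weight $l=(p-1)p^{m-1}$, where the coefficients are continuous, so $[f,H]_{k,l'}$ acquires no new denominators and is a form of level $p^r$ congruent to $\varTheta_{k,\rho}(f)$.

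Two further problems. First, your $G$, defined as the $\mathcal{P}_0$-part alone, equals $\varTheta_{k,\rho}(f)\cdot g$ and is \emph{not} a modular form; the modular object is the full bracket $[f,g]_{k,l}$, and the congruence must be extracted from it. Second, your deduction of $p$-adicity invokes Theorem \ref{th:3.11} ``applied to each $G$'', but Theorem \ref{th:3.11} is proved only for scalar-valued forms, whereas $G$ is $\rho$-valued of level $p^r$; this is exactly the obstruction the paper flags (one cannot take $p$-th powers of vector-valued forms). The paper's proof circumvents it by first replacing the \emph{scalar} form $f$, modulo an arbitrary power of $p$, by a form $\tilde f$ of level $\Gamma_0^n(p)$ (Corollary \ref{cor:3.10}), so that the resulting bracket has level $p$, and then applying Theorem \ref{th:3.4}, which does cover vector-valued forms of level $p$.
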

\begin{proof}
We proceed as follows:\\
We start by the same procedure as before, investigating
$$[f,F^{p^{m-1}}_{p-1}\mid V_{m'-1}]_{k,l},$$
assuring the congruence

$$\varTheta_{k,\rho}(f)\equiv F\bmod p^m$$
as in theorem 4.1.

Now we observe, that $F^{p^{m-1}}_{p-1}\mid V_{m'-1}$ is
congruent to a modular form $H$ of level one, weight $l'$, more precisely,
there is a weight $l'=(p-1)p^{m-1}+\alpha(p-1)p^{N'}$
such that 
$$F^{p^{m-1}}_{p-1}\mid V_{m'-1}\equiv H\bmod p^N$$

Now we consider the bracket

$$[f,H]_{k,l'}$$
instead of the bracket above.
If we choose $N,\,N'$ sufficiently large, we do not get new $p$-denominators
 (because of the continuity condition)
and we get that 
$$\varTheta_{k,\rho}(f)$$
is congruent modulo  a power of $p$ to a 
modular form of level $p^r$.
\\
To see that $\varTheta_{k,\rho}$ is $p$-adic, we have to modify the procedure above still further:
We substitute $f$ modulo an arbitrary power of $p$ by a form $\tilde{f}$ of level $\Gamma_0(p)$.
Then the $G$ as above is also of level $\Gamma_0(p)$. We may then apply Theorem \ref{th:3.4}.

\end{proof}

\subsection{A remark on ${\mathcal P}_0$}
\label{subsec4.3}
Here we show that ${\mathcal P}_0$ is uniquely determined 
(up to a scalar factor) by
a certain invariance property, if $\rho$ is irreducible.\\
The bilinear differential operator $[,]_{k,l}$ defines a polynomial function
$f:Sym_n({\mathbb C})\longrightarrow V_{\rho}$ by
$$\left\{[e_S,e_T]_{k,l}\right\}_{\mid T=0} = f(S)e_S,$$
where $S$ and $T$ are symmetric complex matrices of size $n$ and $e_S$
denotes the function on ${\mathbb H}_n$ defined by 
$$Z\longmapsto \text{exp}(2\pi\sqrt{-1} \text{tr}(S\cdot Z)).$$
The invariance properties of $[,]_{k,l}$ give
$$f(A^tSA)= \rho(A)f(S),$$  
first for all $A\in GL_n({\mathbb R})$ and hence also for 
$A\in GL_n({\mathbb C})$ and then also for $A\in M_n({\mathbb C})$.
Every symmetric complex matrix $S$ can be written as $S=A^t\cdot 1_n\cdot A$
with $A\in M_n({\mathbb C})$.
Therefore $f$ is determined by ${\mathbf v}:= f(1_n)\in V_{\rho}$.
This vector ${\mathbf v}$ is 
$O(n,{\mathbb C})$ invariant. The space of such invariants
is at most one-dimensional, if $\rho$ is irreducible. This follows from
branching rules (see e.g.
\cite{K-T}) or from $GL(n,{\mathbb C}),O(n,{\mathbb C})$ being a Gelfand pair
\cite{AG}.
\begin{rem}
\label{rem:4.6}
 The problem of (non-) vanishing of  ${\mathcal P}_0$ deserves further 
investigation.
\end{rem}
\subsection{Theta operators and Maa{\ss} differential operators}
\label{subsec4.4}
Let us now give some ``basic'' examples of such operators:
Note that the existence of certain Rankin-Cohen brackets is not 
sufficient, we must know the nature of the ``constant term'' 
${\mathcal P}_0$. It is desirable to show quite generally that
${\mathcal P}_0$ is different from zero (say, if $l=(p-1)p^m$
is large). We consider here only a simple type:
Our exposition follows \cite{FR} and \cite{WE};  we also use some elements
of Shimura's theory of nearly holomorphic functions \cite{SH}.

We fix a decomposition $n=r+s$.
For a complex matrix $A$ of size $n$ we 
denote by $A^{[r]}$ the matrix of size $n\choose r$ whose entries
are the $r$-minors of $A$; here we fix an order among the
subsets of $\{1,\dots, n\}$ with $r$ elements.

We get an irreducible representation $\rho^{[r]}$ of $GL_n(\mathbb{C})$
on $Sym_{n\choose r}({\mathbb C})$ by 
$$(A, X)\longmapsto \rho^{[r]}(A)(X):= A^{[r]}\cdot X\cdot (A^{[r]})^t.$$
We denote by $\partial :=(\partial_{ij})$ the $n\times n$-matrix defined by
partial derivatives on ${\mathbb H}_n$:
$$\partial_{ij}=\left\{\begin{array}{ccc}
\frac{\partial}{\partial z_{ii}} & \mbox{if}& i=j\\
\\
\frac{1}{2}\frac{\partial}{\partial z_{ij}} & \mbox{if}& i\not=j
\end{array}\right. .$$
We consider the differential operator 
$\partial^{[r]}$ 
which maps $C^{\infty}$ functions on ${\mathbb H}_n$ to 
$Sym_{n\choose r}({\mathbb C})$-valued functions.
The transformation properties of this operator are well-known
(\cite{FR}, p.214)

$$\partial^{[r]}(h\mid_{\frac{r-1}{2}}M)= \left(\partial^{[r]}h\right)
\mid_{\det^{\frac{r-1}{2}}\otimes \rho^{[r]}}M\qquad (M\in Sp_n({\mathbb R})).$$

For arbitrary weight $k$ we consider now the Maa{\ss}-type differential operator

$$(\partial^{[r]}_k) h:= 
\det(Y)^{-k+\frac{r-1}{2}}\cdot\partial^{[r]}h\cdot \det(Y)^{k-\frac{r-1}{2}}.
$$
This operator changes the automorphy factor from $\det^k$ to 
$\det^k\otimes \rho^{[r]}$.

For arbitrary holomorphic functions $f,g$ on ${\mathbb H}_n$
we consider 
$$(\partial_k^{[r]}f)\cdot g.$$
This is a nearly holomorphic function in the sense of Shimura \cite{SH}.
We think of $f$ and $g$ to carry a $\mid_k$ and $\mid_l$ action of 
$Sp_n({\mathbb R})$ (respectively). Then a structure theorem of 
Shimura \cite{SH} on
such nearly holomorphic functions (with $Sp_n({\mathbb R})$ acting 
by $\mid_{\text{det}^{k+l}\otimes \rho^{[r]}}$) says that (provided that $k+l$ is 
large enough)

$$ (\partial_k^{[r]}f)\cdot g = B_{k,l,\rho}(f,g) +\Delta$$
where $B_{k,l,\rho}(f,g)$ is a holomorphic function and $\Delta$
is a finite sum of images of certain holomorphic functions under 
differential operators of Maa{\ss}-Shimura type.
Analytically $B_{k,l,\rho}(f,g)$ is the ``holomorphic projection''
of  $(\partial_k^{[r]}f)\cdot g$ (at least if $f,g$ are actually modular forms
satisfying certain growth conditions). 
Actually, the decomposition above is of purely algebraic nature and 
an inspection of Shimura's proof shows that the expression
for $B_{k,l\rho}(f,g)$ is a bilinear form in the derivatives of $f$ and $g$
(only depending on $k,l,\rho $), i.e. it is a bilinear differential operator
of Rankin-Cohen type. The coefficients of the derivatives of $f$ and $g$
are rational functions of $k$ and $l$ over  ${\mathbb Q}$. 
The representation $\rho^{[r]}$ is irreducible and using the reasoning of $4.3$ 
we see that the part of $B_{k,l,\rho}$, which is free of derivatives of $g$ 
must be a multiple of $(\partial^{[r]}f)\cdot g $.\\
Now we consider the coefficient $q_{\rho}(k,l)$ of $(\partial^{[r]}f)\cdot g$
in $B_{k,l,\rho}(f,g)$. We observe that this rational function is not identically
zero because clearly
$$(\partial_{\frac{r-1}{2}}^{[r]}f)\cdot g= B_{\frac{r-1}{2},l,\rho}(f,g)  \qquad 
(l>\!\!>0).$$
This implies: There is a finite set $M$ such that
for all $k\in {\mathbb Z}\setminus M$ we have $q_{\rho}(k,l)\not= 0$ for $l>\!\!>0$.
\\
We can rephrase this as follows:
\begin{prop}
\label{prop:4.7}
 For $k\notin M$ there is for $l>\!\!>0$ a 
Rankin-Cohen differential operator
$$[f,g]_{k,l,\rho^{[r]}}= (\partial^{[r]}(f))\cdot g +
\sum_{j\geq 1} {\mathcal P}_j(\partial_1,\partial_2)
(f(Z_1)g(Z_2))_{\mid Z=Z_1=Z_2}$$
satisfying the conditions (R-C 1) - (R-C 3).   
\end{prop}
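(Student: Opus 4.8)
The plan is to observe that nearly all of the analytic content has already been assembled in the running text, so that the proof reduces to normalizing the holomorphic projection and then checking the three conditions in turn. For $k\notin M$ and $l\gg 0$ the rational function $q_{\rho}(k,l)$ is nonzero (this is exactly how $M$ and the range of $l$ were selected, using that $q_{\rho}(\tfrac{r-1}{2},l)\equiv 1$ so $q_{\rho}\not\equiv 0$). Accordingly I would define the bracket to be the normalized holomorphic projection of $(\partial_k^{[r]}f)\cdot g$, namely
$$[f,g]_{k,l,\rho^{[r]}}:=q_{\rho}(k,l)^{-1}\,B_{k,l,\rho}(f,g),$$
and then read off (R-C 1)--(R-C 3) from the structure of $B_{k,l,\rho}$ established via Shimura's theorem.

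For (R-C 1), the inspection of Shimura's proof already recorded above gives that $B_{k,l,\rho}(f,g)$ is a bilinear differential operator, polynomial in the holomorphic derivatives of $f$ and $g$, with coefficients that are rational functions of $k$ and $l$ over $\mathbb{Q}$; dividing by the nonzero scalar $q_{\rho}(k,l)$ preserves this shape. Homogeneity of the total degree $\lambda=\lambda(\rho^{[r]})$ follows because $\partial^{[r]}$ is homogeneous of degree $r$ in the entries of $\partial$ and the weight bookkeeping (the automorphy factor is raised by $\rho^{[r]}$) forces every term to carry the same number $\lambda=r$ of derivatives. The prefactor involving $2\pi\sqrt{-1}$ is only bookkeeping for passage to $\boldsymbol{q}$-expansions: each $\partial_{ij}$ applied to $\boldsymbol{q}^T$ contributes a factor $2\pi\sqrt{-1}$, so a degree-$\lambda$ operator produces the compensating power and the normalization is matched by counting derivatives.

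For (R-C 2), the reasoning of $4.3$ applies: since $\rho^{[r]}$ is irreducible, the part of $B_{k,l,\rho}$ free of derivatives of $g$ must be a scalar multiple of $(\partial^{[r]}f)\cdot g$, and by definition that scalar is $q_{\rho}(k,l)$. Hence after normalization the component $\mathcal{P}_0$ (degree zero in $R_2$) is exactly $(\partial^{[r]}f)\cdot g$ with coefficient $1$, which is manifestly independent of $l=(p-1)p^m$, and the associated operator is $\varTheta_{k,\rho^{[r]}}(f)=\partial^{[r]}(f)$ up to the fixed power of $2\pi\sqrt{-1}$; this has rational Fourier coefficients whenever $f$ does. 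For (R-C 3), the coefficients of $\mathcal{P}$ are $q_{\rho}(k,l)^{-1}$ times rational functions of $l$ over $\mathbb{Q}$, and a rational function of $l$ over $\mathbb{Q}$ is, away from its finitely many poles, locally given by a $p$-adically convergent power series in $l$, hence $p$-adically continuous. Since I am free to shrink to $l\gg 0$, I can discard the finitely many $l$ at which $q_{\rho}(k,\cdot)$ vanishes or the numerators acquire poles, and continuity holds on the remaining range.

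The genuinely substantive input, which I would treat as given since it was argued above through the inspection of Shimura's proof, is that holomorphic projection, an a priori analytic operation, is realized by an algebraic bilinear differential operator whose coefficients depend rationally on the two weights. If one had to prove the proposition from scratch, this is where the real work would lie: making precise that the decomposition $(\partial_k^{[r]}f)\cdot g=B_{k,l,\rho}(f,g)+\Delta$ is of purely algebraic nature and rational in $(k,l)$. Granting this, the only condition needing a little care is (R-C 3), precisely at the point of ruling out poles; the remaining verifications are formal.
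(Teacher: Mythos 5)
Your proposal is correct and matches the paper's own (largely implicit) argument: the paper states Proposition \ref{prop:4.7} as a rephrasing of the preceding discussion, i.e.\ one takes the holomorphic projection $B_{k,l,\rho}(f,g)$ from Shimura's structure theorem, normalizes by the nonzero scalar $q_{\rho}(k,l)^{-1}$ (nonvanishing for $k\notin M$, $l\gg 0$, exactly because $q_{\rho}(\tfrac{r-1}{2},l)=1$), and reads off (R-C 1)--(R-C 3) from the algebraic nature of the decomposition, the irreducibility argument of subsection 4.3, and the rationality of the coefficients in $(k,l)$. Your additional explications (homogeneity degree $\lambda=r$, the $2\pi\sqrt{-1}$ bookkeeping, and $p$-adic continuity of rational functions away from their finitely many poles) are consistent with what the paper leaves tacit.
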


For $1\leq r\leq n$, we put
\[
\varTheta_{k,\rho^{[r]}}:=(2\pi\sqrt{-1})^{-r}\partial^{[r]}.
\]
\begin{thm}
\label{th:4.8}
 Assume that $1\leq r\leq n$. For any
$f\in M_n^k(\Gamma_0^n(p^m))(\mathbb{Q})$ 
the formal power series $\varTheta_{k,\rho^{[r]}}(f)$
is a p-adic modular form.
\end{thm}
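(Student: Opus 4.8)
The plan is to derive Theorem~\ref{th:4.8} as a direct consequence of the machinery already assembled, treating $\varTheta_{k,\rho^{[r]}}$ as the specific theta operator produced by the Rankin-Cohen brackets of the previous subsection. First I would invoke Proposition~\ref{prop:4.7}: for $k\notin M$ and $l\gg 0$ there exists a Rankin-Cohen operator $[f,g]_{k,l,\rho^{[r]}}$ whose leading term (free of derivatives of $g$) is exactly $(\partial^{[r]}f)\cdot g$, and which satisfies conditions (R-C 1)--(R-C 3). By the normalization in (R-C 2), the associated constant-term operator is precisely $\varTheta_{k,\rho^{[r]}}=(2\pi\sqrt{-1})^{-r}\partial^{[r]}$, so that the $\varTheta_{k,\rho}$ appearing in Theorem~\ref{th:4.1'} coincides with our $\varTheta_{k,\rho^{[r]}}$.

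With this identification in hand, I would apply Theorem~\ref{th:4.1'} directly: since the Rankin-Cohen operators from Proposition~\ref{prop:4.7} satisfy (R-C 1)--(R-C 3), the theorem yields, for each $m\ge 0$, a weight $k'$ and a modular form $G\in M_n^{k'}(\Gamma_0^n(p^r),\rho^{[r]})(\mathbb{Q})$ with $\varTheta_{k,\rho^{[r]}}(f)\equiv G\pmod{p^m}$, and moreover asserts that $\varTheta_{k,\rho^{[r]}}(f)$ is a $p$-adic modular form. Thus the bulk of the statement is immediate once the operators are matched up.

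The main obstacle is the exceptional set $M$ and the constraint $k\notin M$ in Proposition~\ref{prop:4.7}: Theorem~\ref{th:4.8} claims the result for \emph{all} weights $k$, whereas Proposition~\ref{prop:4.7} only furnishes a suitable Rankin-Cohen bracket for $k$ outside a finite set $M$ where the rational function $q_{\rho}(k,l)$ might vanish. To handle $k\in M$ I would argue by a level-shifting or normalization device: multiply $f$ by a suitable scalar-valued modular form congruent to $1\bmod p^m$ (for instance a power of $F_{p-1}$, or $\mathcal{E}_i$ from Section~\ref{sec:3.2}) to shift the effective weight out of the finite set $M$ without changing $f\bmod p^m$, then apply the operator in the good weight range and use that $\varTheta$ is compatible with such multiplications modulo $p^m$ up to controllable correction terms. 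Since $M$ is finite and the congruences are required only modulo a fixed power $p^m$, this reduction should close the gap.

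Finally I would assemble the congruences into the conclusion that $\varTheta_{k,\rho^{[r]}}(f)$ is a $p$-adic Siegel modular form in the sense of Definition~\ref{def:2.3}: for level $p^r$ forms one invokes Theorem~\ref{th:3.4} (the $m=1$ case, valid for $\mathbb{Q}$-rational $\rho$, which $\rho^{[r]}$ is, being defined by minors with integer coefficients), while for higher level one passes through Theorem~\ref{th:3.11} as indicated at the end of the proof of Theorem~\ref{th:4.1'}. I expect the whole argument to be short, its only delicate point being the treatment of the excluded weights $k\in M$.
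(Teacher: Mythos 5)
Your proposal is correct and takes essentially the same route as the paper: its proof likewise feeds Proposition~\ref{prop:4.7} into Theorem~\ref{th:4.1'} and disposes of the exceptional set $M$ (and of the requirement $l\gg 0$) exactly as you suggest, by multiplying $f$ by a suitable power of $F_{p-1}$. Your worry about ``controllable correction terms'' is unnecessary: since $\varTheta_{k,\rho^{[r]}}=(2\pi\sqrt{-1})^{-r}\partial^{[r]}$ does not depend on $k$ and acts on $q$-expansions coefficientwise by $p$-integral factors (minors of $T$, $p$ odd), replacing $f$ by $f\cdot F_{p-1}^{N}$ with $F_{p-1}^{N}\equiv 1 \pmod{p^m}$ changes $\varTheta_{k,\rho^{[r]}}(f)$ by nothing modulo $p^m$.
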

\begin{proof}
Using proposition \ref{prop:4.7} we may apply Theorem \ref{th:4.1'}.
We observe that by multiplying $f$ by a power of $F_{p-1}$ we may avoid
the finite set $M$ of  prop. \ref{prop:4.7}; furthermore, in the same way, the assumption
$l>>0$ can be achieved.

\end{proof}

The scalar-valued case ($r=n$) of the theorem above repairs
a gap in the proof of Theorem 5 in \cite{BN1} concerning the ordinary $\varTheta$-operator:
\begin{cor} For any $f\in M^k_n(\Gamma_0(p^m))({\mathbb Q})$
the derivative $\varTheta(f)$ is a $p$-adic modular form.
\end{cor}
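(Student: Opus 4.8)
The plan is to obtain the corollary as the scalar-valued specialization $r=n$ of Theorem \ref{th:4.8}; the only real task is to identify the operator $\varTheta_{k,\rho^{[n]}}$ occurring there with the classical theta operator $\varTheta$ of \cite{BN1}, after which the conclusion is immediate.

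So first I would unwind the case $r=n$ of the constructions of Section \ref{subsec4.4}. Since $\binom{n}{n}=1$, the matrix $A^{[n]}$ of $n$-minors of $A$ has the single entry $\det(A)$, so $\partial^{[n]}=\det(\partial)$ is the determinant of the matrix $\partial=(\partial_{ij})$ of partial derivatives, and the representation $\rho^{[n]}$ acts on $Sym_1(\mathbb{C})=\mathbb{C}$ by $A\mapsto\det(A)^2$, i.e. $\rho^{[n]}=\det^2$. This is one-dimensional, hence irreducible, and manifestly $\mathbb{Q}$-rational. Because $\varTheta_{k,\rho^{[n]}}=(2\pi\sqrt{-1})^{-n}\partial^{[n]}$ is built from $\partial^{[n]}$ rather than the Maa{\ss}-type operator $\partial_k^{[n]}$, it does not in fact depend on $k$. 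Evaluating on a Fourier expansion --- each factor $\partial_{i\sigma(i)}$ in the expansion of $\det(\partial)$ contributes $2\pi\sqrt{-1}\,t_{i\sigma(i)}$ --- one gets
$$\varTheta_{k,\rho^{[n]}}\Big(\sum_T a(T)\boldsymbol{q}^T\Big)=\sum_T\det(T)\,a(T)\,\boldsymbol{q}^T,$$
which is exactly the theta operator $\varTheta$ of \cite{BN1} (the representation $\det^2$ already singled out in condition (R-C 2)).

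With this identification, the corollary follows from Theorem \ref{th:4.8} applied with $r=n$ and this irreducible, $\mathbb{Q}$-rational $\rho^{[n]}=\det^2$: for $f\in M_n^k(\Gamma_0^n(p^m))(\mathbb{Q})$ the power series $\varTheta_{k,\rho^{[n]}}(f)=\varTheta(f)$ is a $p$-adic modular form. Here I would rely on Proposition \ref{prop:4.7} to furnish, for $l\gg0$, Rankin-Cohen brackets satisfying (R-C 1)--(R-C 3) whose derivative-free term is $(\partial^{[n]}f)\cdot g$; the finite exceptional weight set $M$ and the requirement $l\gg0$ are dealt with, just as in the proof of Theorem \ref{th:4.8}, by replacing $f$ with $f\cdot F_{p-1}^{a}$ for suitable $a$ (legitimate since $F_{p-1}\equiv1\pmod p$).

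I do not expect any genuine obstacle: all the analytic and combinatorial substance has already been absorbed into Proposition \ref{prop:4.7} and Theorems \ref{th:4.1'} and \ref{th:4.8}. The only points requiring care are purely bookkeeping --- checking that the normalization $(2\pi\sqrt{-1})^{-n}\det(\partial)$ reproduces exactly the map $a(T)\mapsto\det(T)\,a(T)$ of \cite{BN1}, and verifying that the excluded weights in $M$ do not obstruct the weight $k$ at hand. It is worth stressing that this route repairs the gap in \cite{BN1} by circumventing the faulty congruence entirely (via Rankin-Cohen brackets and forms of high level $p^{m'}$, reduced afterwards with Theorem \ref{th:4.1'}) rather than by patching the original argument.
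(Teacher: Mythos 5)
Your proposal is correct and takes essentially the same route as the paper: the corollary is exactly the scalar-valued specialization $r=n$ of Theorem \ref{th:4.8}, combined with the identification of $\varTheta_{k,\rho^{[n]}}=(2\pi\sqrt{-1})^{-n}\det(\partial)$ (with $\rho^{[n]}=\det^2$) as the classical operator $a(T)\mapsto\det(T)\,a(T)$ of \cite{BN1}, which you verify by the same Fourier-expansion computation implicit in the paper's remark following (R-C 2). Your treatment of the exceptional weight set $M$ and the condition $l\gg 0$ via multiplication by powers of $F_{p-1}$ is precisely the device used in the paper's proof of Theorem \ref{th:4.8}, so nothing further is needed.
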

\subsection{A combinatorial approach (Note added in proof)}

Recently we gave a purely combinatorial approach to the 
differential operators $B_{k,l,\rho}(f,g)$ in the context of
hermitian modular forms \cite{BD}. 
The same approach (avoiding the nonholomorphic differential operators
of subsection 4.4.) also works for the Siegel case as follows: \\
For $0\leq \alpha\leq r\leq n$ we define a polynomial in the matrix 
variables $R,S\in Sym_n({\mathbb C})$ with values in 
$Sym_{n\choose r}({\mathbb C})$ by
$$(R+\lambda S)^{[r]}=\sum_{\alpha=0}^r P_{\alpha,r}(R,S) \lambda^{\alpha}$$
In the notation of \cite[III.6]{FR} we have 
$$
P_{\alpha,r}(R,S)= \left(\begin{array}{c} r\\ \alpha\end{array}\right)
R^{[\alpha]}\sqcap S^{[r-\alpha]}.$$
We put
$$C_h(s)=s\cdot (s+\frac{1}{2})\dots \cdot 
(s+\frac{h-1}{2})\qquad (0\leq h\leq n)$$
Then one can show along the same lines as in \cite{BD} that
$$D(f,g):= \sum_{\alpha=0}^r (-1)^{\alpha}  
C_{\alpha}(l-\frac{r-1}{2}) \cdot C_{r-\alpha}(k-\frac{r-1}{2}) 
{r \choose \alpha}
\partial^{[\alpha]}(f)\sqcap 
\partial^{[r-\alpha]}(g)
$$
is an explicit realization of the Rankin-Cohen bracket $[f,g]_{k,l,\rho^{[r]}}$.

\subsection*{Acknowledgements}
Crucial work on this paper was done during our stay
at the Mathematisches Forschungsinstitut Oberwolfach under the programme
``Research in Pairs'' ; we continued our work during research visits at
Kinki University and Universit\"at Mannheim (respectively); a final revision
was done, when the first author held a guest professorship
at the University of Tokyo. We thank these
institutions for support. We also thank Dr.Kikuta for  
pointing out some gaps in our presentation and Professor T.Ichikawa for
discussions about $p$-adic modular forms.

\end{document}